\newtheorem{theorem}{Theorem}[section]
\newtheorem{lemma}[theorem]{Lemma}
\newtheorem{prop}[theorem]{Proposition}
\newtheorem{cor}[theorem]{Corollary}
\newtheorem{exa}[theorem]{Example}
\newtheorem*{Theorem1'}{Theorem 1'}
\theoremstyle{definition}
\theoremstyle{remark}
\def\Z{{\bf Z}}
\def\GL{{\rm GL}}
\def\Inn{{\rm Inn}}
\def\Aut{{\rm Aut}}
\def\End{{\rm End}}
\def\deg{{\rm deg\,}}
\def\Hol{{\rm Hol}}
\def\a{\alpha}
\def\b{\beta}
\begin{document}

\title[Isomorphism
of relative holomorphs and matrix similarity]{Isomorphism
of relative holomorphs and matrix similarity}

\author{Volker Gebhardt}
\address{Centre for Research in Mathematics and
Data Science, Western Sydney University, Australia}
\email{v.gebhardt@westernsydney.edu.au}

\author{Alberto J. Hernandez Alvarado}
\address{CIMPA, Universidad de Costa Rica, Costa Rica}
\email{albertojose.hernandez@ucr.ac.cr}

\author{Fernando Szechtman}
\address{Department of Mathematics and Statistics, University of Regina, Canada}
\email{fernando.szechtman@gmail.com}
\thanks{The third author was supported in part by an NSERC discovery grant}

\subjclass[2020]{20E22, 13C05}



\keywords{Relative holomorph, semidirect product, matrix similarity}

\begin{abstract} Let $V$ be a finite dimensional vector space over the field with $p$ elements, where $p$ is a prime number.
Given arbitrary $\a,\b\in\GL(V)$, we consider the semidirect products $V\rtimes\langle \a\rangle$ and $V\rtimes\langle \b\rangle$, and show
that if $V\rtimes\langle \a\rangle$ and $V\rtimes\langle \b\rangle$ are isomorphic, 
then $\a$ must be similar to a power of $\b$ that generates
the same subgroup as $\b$; that is, if $H$ and $K$ are cyclic subgroups of 
$\GL(V)$ such that $V\rtimes H\cong V\rtimes K$, then
$H$ and $K$ must be conjugate subgroups of $\GL(V)$.
If we remove the cyclic condition, there exist examples of {\em non-isomorphic},
let alone non-conjugate, subgroups $H$ and $K$ of $\GL(V)$ such that $V\rtimes H\cong V\rtimes K$.
Even if we require that non-cyclic subgroups $H$ and~$K$ of $\GL(V)$ be abelian, we may still have 
$V\rtimes H\cong V\rtimes K$ with $H$ and~$K$ non-conjugate in $\GL(V)$, but in this case, $H$ and $K$ must
at least be isomorphic. If we replace $V$ by a free module $U$ over $\Z/p^m\Z$ of finite rank, with $m>1$, 
it may happen that $U\rtimes H\cong U\rtimes K$ for non-conjugate cyclic subgroups of $\GL(U)$.
If we completely abandon our requirements on~$V$, a sufficient criterion is given for a finite group~$G$
to admit non-conjugate cyclic  subgroups $H$ and~$K$ of $\mathrm{Aut}(G)$ such that 
$G\rtimes H\cong G\rtimes K$. This criterion is satisfied by many groups.
\end{abstract}

\maketitle

\section{Introduction}

We fix throughout a prime number $p$, and write $F$ for the field with $p$ elements as well as
$V$ for an $F$-vector space $V$ of finite dimension $n>0$.
Given an automorphism $\a$ of $V$, we may consider the semidirect product $G_\a=V\rtimes\langle \a\rangle$, where
$$
\a v \a^{-1}=\a(v),\quad v\in V.
$$
Likewise, given $\b\in\GL(V)$, we have the semidirect product $G_\b=V\rtimes\langle \b\rangle$. 
It is well known that if $\langle \a\rangle$ and $\langle \b\rangle$ 
are conjugate subgroups of $\GL(V)$, then $G_\a$ is isomorphic
to $G_\b$. In Theorem~\ref{lindo} we prove the converse: if $G_\a\cong G_\b$, then $\langle \a\rangle$ and $\langle \b\rangle$ 
must be conjugate in $\GL(V)$. Here $\langle \a\rangle$ and~$\langle \b\rangle$ are conjugate if and only if
$\a$ is similar to $\b^i$  for  some integer $i$ coprime to the order of~$\b$, i.e., such that
$\langle \b^i\rangle=\langle \b\rangle$. The proof of Theorem~\ref{lindo} is somewhat subtle. 
A more transparent argument is given in Theorem \ref{lindo2}, provided 
$\a$ and~$\b$ are unipotent, in which case $\a$ and $\b$ must themselves be similar. 
Theorem \ref{lindo}
seems to be exceptional in the realm of group theory, 
in the sense that any changes to the given linear algebra setting will tend to make it fail, as explained below. 

Given an arbitrary group $G$, we may consider its holomorph $\Hol(G)=G\rtimes\mathrm{Aut}(G)$, where
$$
\a g \a^{-1}=\a(g),\quad \a\in \mathrm{Aut}(G), g\in G.
$$
For a subgroup $H$ of $\mathrm{Aut}(G)$, we have the relative holomorph $\Hol(G,H)=G\rtimes H$,
viewed as a subgroup of $\Hol(G)$. If $K$ is also a subgroup of $\mathrm{Aut}(G)$, it is 
well known that if $H$ and~$K$ are conjugate in $\mathrm{Aut}(G)$, then $\Hol(G,H)$ and $\Hol(G,K)$
are conjugate in $\Hol(G)$, and hence isomorphic. The converse is false in general. In Example \ref{e3} we exhibit
{\em non-isomorphic} non-cyclic subgroups $H$ and $K$ of $\GL(V)$ such that
$\Hol(V,H)\cong\Hol(V,K)$, provided $p$ is odd and $n\geq 2$. The case $p=2$ and $n\geq 4$ is dealt with
in Example \ref{e7}. Moreover, Example~\ref{e9} gives non-conjugate, non-cyclic, abelian subgroups $H$ and $K$ of $\GL(V)$ such that
$\Hol(V,H)\cong\Hol(V,K)$ for any $n\geq 6$. On the other hand, in Corollary \ref{suma} we show that if $H$ and $K$ are abelian subgroups of 
$\GL(V)$ such that $\Hol(V,H)\cong\Hol(V,K)$
and the sum of all subspaces $(h-1)V$, with $h\in H$, is equal to $V$, then $H$ is necessarily conjugate to $K$. Furthermore, 
according to Proposition \ref{abe}, $\Hol(V,H)\cong\Hol(V,K)$ always
forces $H\cong K$ when $H$ and $K$ abelian. 
Example \ref{final} shows that Theorem~\ref{lindo} fails, in general, if $V$ is replaced by a free module $U$ over $\Z/p^m\Z$
of finite rank $n$. Indeed, when $p=2, m=3, n=4$, we exhibit an explicit automorphism $\a$ of $U$ such that 
$\Hol(U,\a)\cong\Hol(U,\b)$ for exactly two conjugacy classes of cyclic subgroups $\langle \b\rangle$ of $\GL(U)$.
When used in conjunction with Lemma \ref{nicecase0}, Examples \ref{final} and \ref{e9} 
reveal instances when $\Hol(G,H)\cong\Hol(G,K)$ but no isomorphism between them sends $G$ back into itself.

If we completely abandon our requirements on $V$, Example \ref{e1} gives a sufficient criterion for a group $G$
to admit cyclic subgroups $H$ and $K$ such that $\Hol(G,H)\cong\Hol(G,K)$ but $H$ and $K$
are not conjugate in $\mathrm{Aut}(G)$. This is illustrated  with various instances of such $G$, $H$, and $K$. 
Finally, Example \ref{e6} lists a few groups $G$ such that the existence of an isomorphism between $\Hol(G,H)$ and $\Hol(G,K)$, for 
arbitrary subgroups $H$ and $K$ of $\mathrm{Aut}(G)$, forces $H$ to be similar to $K$.

The holomorph of a group appeared early in the literature to produce examples of complete groups,
that is, groups having trivial center and only inner automorphisms. In 1908 Miller \cite{M} showed that 
the holomorph of any finite abelian group of odd order is complete, based on prior work of Burnside \cite{B}
on the subject. Miller's result was
extended to show that the holomorph, or a relative holomorph, of other classes of abelian groups, not necessarily finite, are also
complete, see \cite{Be, C, H, P}, for instance. A long-standing problem in this regard was the existence of complete
groups of odd order, settled positively by Dark \cite{D} in 1975.

We are concerned here with a specific case of the general problem of finding necessary and sufficient conditions
for two semidirect products $A\rtimes_f B$ and $C\rtimes_g D$ to be isomorphic, where $f:B\to\mathrm{Aut}(A)$
and $g:D\to\mathrm{Aut}(C)$ are homomorphisms. No general answer is known.
When $A=C$ and $B=D$ this problem was investigated by Taunt \cite{T} and Kuzennyi \cite{K}.
If $B=\mathrm{Aut}(A)$, $D=\mathrm{Aut}(C)$, and $f$ and $g$ are the identity maps, Mills showed in \cite{Mi} (resp. \cite{Mi2})
that $\Hol(A)\cong\Hol(C)$ forces $A\cong C$ when both $A$ and $C$ are finitely generated abelian
groups (resp. when $A$ or $C$ are finite abelian groups). Mills \cite{Mi} also pointed out that 
when $n\geq 3$, the non-isomorphic dihedral and generalized quaternion groups of order $4n$ have isomorphic holomorphs.
A proof can be found in Kohl's paper \cite[Proposition 3.10]{Ko}. In Miller's work \cite{M}, the holomorph
of a group $G$ is viewed as the normalizer in the symmetric group $S(G)$ of the left (resp. right)
regular representation of $G$. Miller referred to the normalizer in $S(G)$ of $\Hol(G)$ as the multiple holomorph, say $M(G)$,
of $G$. The quotient group $T(G)=M(G)/\Hol(G)$ has interesting properties and has 
received considerable attention recently. The 
structure of $T(G)$ was determined by Kohl \cite{Ko2} for dihedral and generalized quaternion groups, and by
Caranti and Dalla Volta \cite{CD} for finite perfect groups with trivial center. 

The conjugacy of subgroups $H$ and $K$ of $\mathrm{Aut}(G)$ is an obvious sufficient condition for $\Hol(G,H)$ to be isomorphic 
to $\Hol(G,K)$,
and in the present paper we show that it is also necessary, provided $H$ and $K$ are cyclic and $G=V$.
Our examples show that one cannot deviate much from the stated hypotheses for 
the necessity of conjugacy to hold. One may use our results in the classification, up to isomorphism, of the relative holomorphs
of an elementary abelian group such as $V$. This is a difficult problem, as attested by one of its simplest cases \cite{VO},
namely when $p=2$ and $n=4$, in which case there are 138 relative holomorphs.


\section{Background from linear algebra}\label{la}

Given $\alpha\in\End(V)$, we write $m_\alpha\in F[X]$ for the minimal polynomial of $\alpha$.
Given a group $G$ and $g\in G$, we let $o(g)$ stand for the order of $g$.

\begin{lemma}\label{orden} Let $\alpha\in\GL(V)$. Then $p\nmid o(\alpha)$ if and only if $m_\alpha$ is square-free.
\end{lemma}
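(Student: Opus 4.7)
The plan is to prove the two implications separately, both via standard facts linking minimal polynomials, orders, and the arithmetic of $F[X]$.

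For the forward direction, I would set $N=o(\alpha)$ and observe that $\alpha^N=1$ forces $m_\alpha\mid X^N-1$ in $F[X]$. Under the hypothesis $p\nmid N$, the integer $N$ is nonzero in $F$, so the formal derivative $NX^{N-1}$ of $X^N-1$ is coprime to $X^N-1$ in $F[X]$; hence $X^N-1$ is separable, and in particular square-free. Any divisor of a square-free polynomial is square-free, so $m_\alpha$ is square-free.

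For the converse, I would use the primary decomposition of $V$ as an $F[\alpha]$-module. Write $m_\alpha=f_1\cdots f_k$ with the $f_i\in F[X]$ distinct monic irreducibles, set $d_i=\deg f_i$, and decompose
\[
V=V_1\oplus\cdots\oplus V_k,\qquad V_i=\ker f_i(\alpha),
\]
so that $V_i$ becomes a module over $F[X]/(f_i)\cong\mathbb{F}_{p^{d_i}}$. Because $\alpha\in\GL(V)$, the constant term of $m_\alpha$ is nonzero, so $f_i(0)\neq 0$ for each $i$, which means that the image of $X$ in $\mathbb{F}_{p^{d_i}}$ is a nonzero element, hence of order $n_i\mid p^{d_i}-1$, and in particular coprime to $p$. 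Then $\alpha^{n_i}$ acts trivially on $V_i$, so
\[
o(\alpha)=\mathrm{lcm}(n_1,\dots,n_k),
\]
which is coprime to $p$.

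There is no real obstacle here; the only mild subtlety is verifying in the converse that $0$ is not a root of any $f_i$, which is immediate from $\alpha\in\GL(V)$. The rest is a routine combination of primary decomposition and the separability of $X^N-1$ over a field of characteristic prime to $N$.
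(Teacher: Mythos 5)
Your proof is correct. The forward direction is exactly the paper's argument: $m_\alpha$ divides $X^N-1$, and when $p\nmid N$ the derivative criterion makes $X^N-1$ separable, hence square-free, hence so is any divisor. For the converse, however, you take a genuinely different route. The paper leans on the \emph{minimality} of $m=o(\alpha)$ among exponents with $m_\alpha\mid X^m-1$: if $p\mid m$, then $X^m-1=(X^{m/p}-1)^p$ in characteristic $p$, so a square-free $m_\alpha$ would already divide $X^{m/p}-1$, contradicting minimality. You instead use the primary decomposition $V=\bigoplus_i \ker f_i(\alpha)$ coming from the factorization of the square-free $m_\alpha$ into distinct irreducibles, identify $\alpha$ on each summand with multiplication by a nonzero element of the finite field $F[X]/(f_i)\cong\mathbb{F}_{p^{d_i}}$, and conclude that $o(\alpha)=\mathrm{lcm}(n_1,\dots,n_k)$ divides $\mathrm{lcm}_i(p^{d_i}-1)$, which is prime to $p$. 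Your version is longer but more structural, and it yields the sharper quantitative fact that $o(\alpha)$ divides $\mathrm{lcm}_i(p^{d_i}-1)$; the paper's version is a two-line argument that never needs the module decomposition. Your side remark that $f_i\neq X$ because $\alpha$ is invertible is indeed the one point that needs checking, and you handle it correctly.
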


\begin{proof} Let $m=o(\alpha)$. Then $m$ is the smallest natural number such that
$$
m_\alpha\mid (X^m-1).
$$
Now $X^m-1$ and its formal derivative, namely $mX^{m-1}$, are relatively prime if and only if $p\nmid m$.
\end{proof}

\begin{lemma}\label{psimilar} Let $\alpha\in\GL(V)$. Then $\alpha$ is similar to $\alpha^p$ if and only if $p\nmid o(\alpha)$.
\end{lemma}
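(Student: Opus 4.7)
The plan is to prove both directions separately.

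For the easy direction ($\Rightarrow$), if $\alpha$ is similar to $\alpha^p$ then $o(\alpha)=o(\alpha^p)$. Since $o(\alpha^p)=o(\alpha)/\gcd(p,o(\alpha))$, the assumption $p\mid o(\alpha)$ would force $o(\alpha^p)=o(\alpha)/p<o(\alpha)$, a contradiction. Hence $p\nmid o(\alpha)$.

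For the converse, I would exploit Lemma \ref{orden}. Assuming $p\nmid o(\alpha)=m$, the minimal polynomial $m_\alpha$ is square-free and divides $X^m-1$, which is separable over $F$. Hence $\alpha$ is diagonalizable over the algebraic closure $\overline{F}$, and all of its eigenvalues are $m$-th roots of unity in $\overline{F}$. Let $M$ be the multiset of eigenvalues of $\alpha$ (counted with multiplicity). The multiset of eigenvalues of $\alpha^p$ is then $\{\lambda^p : \lambda\in M\}$, i.e., the image $\phi(M)$ of $M$ under the Frobenius $\phi\colon\overline{F}\to\overline{F}$. Because $\alpha$ is defined over $F=\mathbb{F}_p$, the multiset $M$ is invariant under $\mathrm{Gal}(\overline{F}/F)$; since $\phi$ generates this Galois group, $\phi(M)=M$. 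Thus $\alpha$ and $\alpha^p$ are both diagonalizable over $\overline{F}$ and share the same eigenvalue multiset, so they are similar over $\overline{F}$. Similarity of matrices defined over $F$ descends from $\overline{F}$ to $F$ (the invariant factors computed by the rational canonical form are insensitive to field extension), so $\alpha$ is similar to $\alpha^p$ over $F$.

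An alternative, completely $F$-internal argument uses the primary decomposition directly: since $m_\alpha=p_1\cdots p_r$ is square-free with distinct irreducible factors, one writes $V=\bigoplus_i V_i$ with $V_i=\ker p_i(\alpha)$, on which $V_i$ becomes a finite-dimensional vector space over the field $F_i=F[X]/(p_i)$ and $\alpha|_{V_i}$ is multiplication by the image $\lambda_i$ of $X$ in $F_i$. Then $\alpha^p|_{V_i}$ is multiplication by $\lambda_i^p$, which has the same minimal polynomial $p_i$ over $F$ because the Frobenius of $F_i$ permutes the roots of $p_i$. A dimension count shows that $(V_i,\alpha|_{V_i})$ and $(V_i,\alpha^p|_{V_i})$ have the same invariant factors as $F[X]$-modules, so they are $F$-similar, and assembling over $i$ finishes the proof.

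The only real subtlety is the descent of similarity from $\overline{F}$ to $F$ in the first argument, or equivalently the dimension bookkeeping in the second; both steps are standard linear-algebra facts, so I expect no serious obstacle.
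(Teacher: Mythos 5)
Your proof is correct and follows essentially the same route as the paper's: both directions hinge on Lemma \ref{orden} together with the fact that the Frobenius map fixes $F$ and therefore permutes the roots (with multiplicity) of any polynomial over $F$, so that $\alpha$ and $\alpha^p$ have the same separable similarity type. The only packaging difference is that the paper works one invariant factor at a time and identifies $C_f^p$ with the companion matrix $C_f$ by a degree count, thereby avoiding the (standard, and correctly invoked) descent of similarity from $\overline{F}$ to $F$ that your first argument uses; your alternative primary-decomposition argument is likewise sound.
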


\begin{proof} If $p\mid o(\alpha)$ then $\alpha$ and $\alpha^p$ have different orders, whence they are not similar.

Suppose $p\nmid o(\alpha)$. Then by Lemma \ref{orden} the invariant factors of $\alpha$ are all square-free.
Thus the rational canonical form of $\alpha$ is the direct sum of companion matrices $C_f$ to square-free monic polynomials
$f\in F[X]$. Let $f\in F[X]$ be an invariant factor of $\alpha$ of degree $d$. Since every irreducible polynomial over $F$ has distinct roots in any splitting field and $f$ is square-free, it follows that $f$ has distinct roots in a splitting field, say $K$.
In particular, $C_f$ is similar in $\GL_d(K)$ to a diagonal matrix $\mathrm{diag}(\lambda_1,\dots,\lambda_d)$.
Thus $C_f^p$ is similar in $\GL_d(K)$ to $\mathrm{diag}(\lambda_1^p,\dots,\lambda_d^p)$. Now the map $\Omega:K\to K$
$$
\Omega(k)=k^p,\quad k\in K,
$$
is an automorphism of $K$ with fixed field $F$. We also write $\Omega$ for the associated automorphism $K[X]\to K[X]$.
Since $f\in F[X]$, we see that $f=\Omega(f)=(X-\lambda_1^p)\cdots (X-\lambda_d^p)$ is the minimal
polynomial of $C_f^p$. Thus $C_f^p$ is similar to $C_f$ in $\GL_d(F)$, whence $\alpha$ and $\alpha^p$ have the same rational canonical form.
\end{proof}

Given a field $K$, and $K$-vector spaces $V_1$ and $V_2$ with automorphisms $\a_1$ and $\a_2$, respectively, we say that $\a_1$
and $\a_2$ are similar if there is an isomorphism $f:V_1\to V_2$ such that $f\a_1 f^{-1}=\a_2$.

\begin{lemma}\label{pid} Let $R$ be a principal ideal domain, $M$ a non-zero finitely generated torsion $R$-module, and $q\in R$
an irreducible element. Then the isomorphism type of $M$ is completely determined by the composition length of $M$ and the isomorphism type of $N=qM$.
In particular, if $K$ is a field, $W$ is a non-zero finite dimensional $K$-vector space, $q\in K[X]$ is irreducible, and $u,v\in\mathrm{End}(W)$
are such that $u|_{q(u)W}$ and $v|_{q(v)W}$ are similar, then $u$ and $v$ are similar.
\end{lemma}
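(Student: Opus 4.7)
The plan is to invoke the structure theorem for finitely generated modules over a PID, writing $M\cong\bigoplus_{i=1}^k R/(p_i^{e_i})$ with the $p_i$ irreducible, and track what happens to each summand under multiplication by $q$. If $p_i$ is not associate to $q$, then $q$ is a unit in $R/(p_i^{e_i})$, so that summand is preserved in $N=qM$. If $p_i$ is associate to $q$, then $q\cdot R/(q^{e_i})\cong R/(q^{e_i-1})$, which vanishes precisely when $e_i=1$. From the isomorphism type of $N$ we therefore read off every non-$q$-primary summand of $M$ directly, and every $q$-primary summand with $e_i\ge 2$ by shifting the exponent up by one. The one piece of $M$ not visible in $N$ is the number of $R/(q)$ summands.

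To recover that number, bring in the composition length. Since $\mathrm{length}(R/(p^e))=e$, the above analysis shows that $\mathrm{length}(M)-\mathrm{length}(N)$ equals the total number of $q$-primary summands of $M$, the passage to $qM$ shortening the length by exactly one on each $q$-primary piece and leaving the others alone. The number of $q$-primary summands of $M$ with $e_i\ge 2$ is already encoded in $N$, since they correspond to the $q$-primary summands of $N$, so subtraction pins down the number of $R/(q)$ summands in $M$. This determines $M$ up to isomorphism.

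For the second statement, view $W$ as a $K[X]$-module $M_u$ via $u$ and as $M_v$ via $v$; both are finitely generated torsion $K[X]$-modules. Then $q(u)W=qM_u$ with $X$ acting as $u|_{q(u)W}$, and similarly for $v$, so the hypothesis on restrictions translates to $qM_u\cong qM_v$ as $K[X]$-modules. To invoke the first part one needs $\mathrm{length}(M_u)=\mathrm{length}(M_v)$, and this is where the main subtlety lies, because the hypothesis on $u$ and $v$ naturally supplies $K$-dimensions rather than composition lengths, which disagree once $\deg q>1$. The bridge is the identity $\dim_K M-\dim_K qM=\deg(q)\cdot(\text{number of }q\text{-primary summands of }M)$, obtained from the fact that each $q$-primary summand $R/(q^{e_i})$ loses $K$-dimension $\deg q$ upon multiplication by $q$ while the other summands lose none. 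Since $\dim_K M_u=\dim_K W=\dim_K M_v$ and $qM_u\cong qM_v$ have equal $K$-dimensions, the number of $q$-primary summands agrees for $M_u$ and $M_v$; combined with the matching number of $q$-primary summands of exponent at least two (read off from $qM$), the number of $R/(q)$ summands and hence the composition lengths of $M_u$ and $M_v$ match, so the first part applies.
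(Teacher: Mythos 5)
Your proof is correct and follows essentially the same route as the paper's: decompose $M$ into primary cyclic summands via the structure theorem, observe that multiplication by $q$ preserves the non-$q$-primary part and shifts each $q$-power exponent down by one, and recover the multiplicity of the $R/(q)$ summands from the length (respectively, the $K$-dimension divided by $\deg q$) data. Your bookkeeping for the second part, via the identity $\dim_K M-\dim_K qM=\deg(q)\cdot(\text{number of $q$-primary summands})$, is a minor repackaging of the paper's formula for $\dim_K M$ and is equally valid.
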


\begin{proof} For any $r\in R$, set
$$
M_r=\{x\in M\,|\, r^t x=0\text{ for some }t\geq 1\},\, N_r=\{x\in N\,|\, r^t x=0\text{ for some }t\geq 1\}.
$$
Let $(q_1^{a_1}\cdots q_s^{a_s})$ be the annihilating ideal of $M$, where $q_1,\dots,q_s$ are non-associate irreducible elements of $R$, and each $a_i\geq 1$.
If $q$ is non-associate to every $q_i$, then $M=N$ and there is nothing to do. We assume henceforth that $q$ is associate to some $q_i$, say $q_1$. Then
$M_{q_j}=N_{q_j}$ for every $j>1$. Moreover, we have
$$
M=M_{q_1}\oplus\cdots\oplus M_{q_s}.
$$
It remains to see that the isomorphism type of $M_q=M_{q_1}$ is determined by that of $N_q$ and the composition length of $M$. There exists a unique sequence of non-negative integers $(e_1,e_2,\dots)$,
which is eventually 0, and such that 
$$
M_q=U_1\oplus U_2\oplus\cdots,
$$
where each $U_i$ is the direct sum of $e_i$ cyclic submodules with annihilating ideal $(q^i)$. The corresponding sequence for $N_q$ is clearly $(e_2,e_3,\dots)$. Thus $e_2,e_3,\dots$
are determined by the isomorphism type of $N$. Moreover,
the composition length of $M$ is equal to the sum of the composition lengths of the $M_{q_j}$, $j>1$, plus $e_1+2e_2+3e_3+\cdots$, so $e_1$ is determined by the composition length of $M$ and
the isomorphism type of $N$. This proves the first statement.

As for the second, take $R=K[X]$, and view $M=W$ and $N=q(u)W$ as $R$-modules via $u$. The similarity type of $u$ (resp. $u|_N$) is completely determined by the isomorphism type of $M$ (resp. $N$) as 
$R$-module. Moreover, in the above notation, $e_2,e_3,\dots$
are determined by the isomorphism type of $N$, and
$$
\dim_K M=(\deg q)(e_1+2e_2+3e_3+\cdots)+\dim_K M_{q_2}+\cdots+\dim_K M_{q_s},
$$
so $e_1$ is determined by the fixed quantities $\dim_K M$ and $\deg q$, together with the isomorphism type of $N$ as $R$-module. This proves the second statement.
\end{proof}

\section{Background from group theory}\label{gt}

Given a group $G$ with subgroups $H$ and $K$, we say that $H$ and $K$ are conjugate in $G$ if there is $g\in G$ such that
$gHg^{-1}=K$, in which case we write $H\sim  K$. 

\begin{lemma}\label{conjugados} Let $G$ be a group, and suppose that $H_1,H_2$ are conjugate subgroups of $\Aut(G)$. Then
$\Hol(G,H_1)$ and  $\Hol(G,H_2)$ are conjugate subgroups of $\Hol(G)$, and are therefore isomorphic.
\end{lemma}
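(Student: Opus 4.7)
The plan is to use a conjugating element coming directly from $\Aut(G)$ itself. By hypothesis there exists $\phi \in \Aut(G)$ with $\phi H_1 \phi^{-1} = H_2$. I would view $\phi$ as an element of $\Hol(G) = G \rtimes \Aut(G)$ via the natural embedding $\Aut(G) \hookrightarrow \Hol(G)$ (i.e., identify $\phi$ with $(1_G, \phi)$), and then show that conjugation by $\phi$ inside $\Hol(G)$ carries $\Hol(G, H_1)$ to $\Hol(G, H_2)$.

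The verification splits into two pieces. First, for any $g \in G \subseteq \Hol(G)$, the defining relation of the holomorph gives $\phi g \phi^{-1} = \phi(g) \in G$, so $\phi G \phi^{-1} = G$. Second, for any $h \in H_1 \subseteq \Aut(G) \subseteq \Hol(G)$, the product $\phi h \phi^{-1}$ is computed inside the subgroup $\Aut(G)$ of $\Hol(G)$ and therefore lies in $\phi H_1 \phi^{-1} = H_2$. Together, these two facts show that
\[
\phi\, \Hol(G, H_1)\, \phi^{-1} \;=\; \phi\,(G \rtimes H_1)\,\phi^{-1} \;=\; G \rtimes H_2 \;=\; \Hol(G, H_2),
\]
which establishes the desired conjugacy inside $\Hol(G)$. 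Conjugate subgroups are isomorphic, so the second conclusion follows at once.

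There is no real obstacle here; the only thing to be careful about is keeping straight how $\Aut(G)$ and $G$ sit inside $\Hol(G)$ and how the semidirect-product multiplication translates the automorphism action into honest conjugation. Once that identification is made explicit, the argument is a one-line computation using the two defining relations of the holomorph.
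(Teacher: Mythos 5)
Your proposal is correct and follows exactly the same route as the paper's proof: conjugating by the automorphism $\phi$ viewed inside $\Hol(G)$, using $\phi g\phi^{-1}=\phi(g)$ to see that $G$ is preserved and the hypothesis to see that $H_1$ is carried to $H_2$. Nothing to add.
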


\begin{proof} By assumption, there is $\gamma\in\Aut(G)$ such that $\gamma H_1\gamma^{-1}=H_2$. Then $\gamma\in \Hol(G)$
and we have $\gamma G\gamma^{-1}=\gamma(G)=G$  inside of $\Hol(G)$. Therefore
$$
\gamma \Hol(G,H_1)\gamma^{-1}=\gamma (G\rtimes H_1)\gamma^{-1}=\gamma(G)\rtimes \gamma H_1\gamma^{-1}=G\rtimes H_2=\Hol(G,H_2).
\qedhere
$$
\end{proof}

\begin{lemma}\label{ord} Let $G$ be a finite group having a normal subgroup $N$ such that $\gcd(|G/N|,|N|)=1$.
Then any subgroup $K$ of $G$ such that $|K|$ is a factor of $|N|$ must be included in $N$. In particular, if $x\in G$
is such that $o(x)$ is a factor of $|N|$, then $x\in N$.
\end{lemma}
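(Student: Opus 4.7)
The plan is to pass to the quotient $G/N$ and use the coprimality hypothesis together with the second isomorphism theorem.

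First I would observe that since $N$ is normal in $G$, the product $KN$ is a subgroup of $G$, and the second isomorphism theorem gives
$$
KN/N\cong K/(K\cap N).
$$
In particular, $|KN/N|$ divides $|K|$. On the other hand, $KN/N$ is a subgroup of the quotient $G/N$, so $|KN/N|$ also divides $|G/N|$.

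Next I would combine these divisibilities with the hypothesis. Since $|K|$ is a factor of $|N|$, the number $|KN/N|$ divides $|N|$ as well as $|G/N|$, hence divides $\gcd(|N|,|G/N|)=1$. Therefore $|KN/N|=1$, which means $KN=N$, i.e., $K\subseteq N$.

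The particular case follows by applying the result to the cyclic subgroup $K=\langle x\rangle$, whose order equals $o(x)$ and hence divides $|N|$; the lemma then gives $x\in\langle x\rangle\subseteq N$. I do not anticipate any real obstacle here; the argument is a short and standard application of the second isomorphism theorem, and the only care needed is to justify that $KN$ is indeed a subgroup, which is immediate from the normality of $N$.
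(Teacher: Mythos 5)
Your argument is correct and is essentially identical to the paper's own proof: both apply the second isomorphism theorem to get $|KN/N|\mid |K|\mid |N|$, note $|KN/N|\mid |G/N|$, and conclude from the coprimality that $KN/N$ is trivial. The specialization to $K=\langle x\rangle$ is also exactly how the paper handles the final claim.
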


\begin{proof} As $KN/N\cong K/(K\cap N)$, we have $|KN/N|\mid |K|$. But $KN/N$ is a subgroup of $G/N$, so $|KN/N|\mid |G/N|$.
Since $\gcd(|G/N|,|N|)=1$, we infer that $KN/N$ is trivial, so $K\subseteq N$.
\end{proof}

\begin{lemma} \label{nicecase0} Let $A$ be an abelian group and let $H$ and $K$ be subgroups of $\mathrm{Aut}(A)$.
Suppose that $f:\Hol(A,H)\to\Hol(A,K)$ is an isomorphism such that $f(A)=A$. Then $H\sim K$. In particular, if $A$
is finite and abelian, and $\gcd(|A|,|H|)=1$, then $\Hol(A,H)\cong\Hol(A,K)$ forces $H\sim K$.
\end{lemma}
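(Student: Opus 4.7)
The plan is to exploit the assumption $f(A)=A$ to produce an automorphism of $A$ that conjugates $H$ into $K$, and then to bootstrap from $f(A)=A$ back to an arbitrary isomorphism in the coprime setting via Lemma~\ref{ord}.

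First I would set $\gamma = f|_A \in \Aut(A)$. For each $h \in H$, write $f(h) = a_h k_h$ with $a_h \in A$ and $k_h \in K$; this is the unique such decomposition since $\Hol(A,K) = A \rtimes K$. Given $x \in A$, I would compute $f(hxh^{-1})$ in two ways. On the one hand it equals $\gamma(h(x))$. On the other hand it equals $f(h)\gamma(x) f(h)^{-1} = (a_h k_h)\gamma(x)(a_hk_h)^{-1}$, and since $A$ is abelian the inner $A$-conjugation is trivial, so this simplifies to $k_h(\gamma(x))$. Equating the two gives $\gamma h \gamma^{-1} = k_h$ as elements of $\Aut(A)$, for every $h \in H$.

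Next I would check that $h \mapsto k_h$ is a bijection $H \to K$. Since $A$ is normal in both holomorphs, $f$ descends to an isomorphism $\bar{f}: \Hol(A,H)/A \to \Hol(A,K)/A$, and the projections give canonical isomorphisms with $H$ and $K$ respectively sending $ah \mapsto h$ and $ak \mapsto k$. Under these identifications $\bar{f}$ is precisely the map $h \mapsto k_h$, so it is an isomorphism. Therefore $\gamma H \gamma^{-1} = K$ in $\Aut(A)$, proving the first assertion.

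For the ``in particular'' statement, I want to show that under the assumption $A$ finite abelian and $\gcd(|A|,|H|)=1$, any isomorphism $f: \Hol(A,H) \to \Hol(A,K)$ automatically satisfies $f(A)=A$. Comparing orders, $|K|=|H|$, so $\gcd(|A|,|K|)=1$ as well. I claim $A$ is characterized inside $\Hol(A,H)$ as the set of elements whose order divides $|A|$: if $x \in A$ then $o(x) \mid |A|$ by Lagrange, while conversely if $o(x) \mid |A|$ then Lemma~\ref{ord} applied to $\langle x \rangle$ and $N = A$ forces $x \in A$. The identical characterization holds in $\Hol(A,K)$. Since $f$ preserves orders of elements, $f(A) = A$, and the first part applies. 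The main subtlety is keeping track of which identifications are canonical in step three, but everything flows from the abelianness of $A$ killing the inner action of $A$ on itself inside the semidirect product.
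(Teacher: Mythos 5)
Your proposal is correct and follows essentially the same route as the paper: decompose $f(h)=a_hk_h$, use the abelianness of $A$ to see that conjugation by $f(h)$ and by $k_h$ agree on $A$, deduce $\gamma h\gamma^{-1}=k_h$, and identify $h\mapsto k_h$ with the induced isomorphism of quotients to get surjectivity. The coprime case is likewise handled exactly as in the paper, via Lemma~\ref{ord} (the paper applies it to the subgroup $f(A)$ directly rather than elementwise, but this is an immaterial difference).
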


\begin{proof} By hypothesis $f$ restricts to an automorphism $u$ of $A$ and induces an isomorphism
$g:\Hol(A,H)/A\to \Hol(A,K)/A$. On the other hand,
there are isomorphisms $i:H\to \Hol(A,H)/A$ and $j:K\to \Hol(A,K)/A$.
Let $v:H\to K$ be the isomorphism given by $v=j^{-1}gi$.

Given any $h\in H$, we have $f(h)=b k$ for unique $b\in A$ and $k\in K$ and, by definition,
$v(h)=k$. Since $A$ is abelian, conjugation by $k=v(h)$ and $bk=f(h)$ agree on $A$. Thus,
for any $a\in A$, 
\begin{equation}\label{typ2}
u((h)(a))= u(hah^{-1})=f(hah^{-1})=f(h)f(a)f(h)^{-1}=
v(h)u(a)v(h)^{-1}=v(h)(u(a)).
\end{equation}
Therefore $u h u^{-1}=v(h)$ for all $h\in H$. As $v$ is an isomorphism, we infer $u H u^{-1}=K$, which
proves the first part. As for the second, suppose that $A$
is finite and abelian, $\gcd(|A|,|H|)=1$, and that $s:\Hol(A,H)\to\Hol(A,K)$ is an isomorphism. Then
$s(A)$ is a normal subgroup of $\Hol(A,K)$ of order $|A|$, where $|A|$ relatively prime to $|H|=|K|=|\Hol(A,K)/A|$.
It follows from Lemma \ref{ord} that $s(A)=A$, whence $H\sim K$ by the first part.
\end{proof}

The condition that $A$ be abelian is not necessary for the second part of Lemma \ref{nicecase0},
although we will not require this more powerful result.
The condition that  $f(A)=A$ cannot be removed with impunity from the first part of Lemma \ref{nicecase0}, as Examples \ref{final} and 
\ref{e9} show.

\begin{cor}\label{suma} Let $A$ be a finite (additive) abelian group and let $H$ and $K$ be abelian subgroups of $\mathrm{Aut}(A)$.
Suppose that $\Hol(A,H)\cong\Hol(A,K)$ and that the sum of all subgroups $(h-1)(A)$, as $h$ runs through~$H$, is equal to $A$.
Then $H\sim K$.
\end{cor}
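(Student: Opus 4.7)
The plan is to reduce to Lemma \ref{nicecase0} by showing that \emph{any} isomorphism $f:\Hol(A,H)\to\Hol(A,K)$ must satisfy $f(A)=A$. The hypothesis will be translated into a statement about the derived subgroup of $\Hol(A,H)$, which is automatically preserved by $f$.

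First I would reinterpret the hypothesis group-theoretically. Writing $A$ additively inside $G:=\Hol(A,H)=A\rtimes H$, the commutator of $h\in H$ with $a\in A$ is $[h,a]=h(a)-a=(h-1)(a)$, so $[H,A]=\sum_{h\in H}(h-1)(A)$. The assumption therefore reads $[H,A]=A$.

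Next I would show that $[G,G]=[H,A]$ whenever both $A$ and $H$ are abelian. The inclusion $[H,A]\subseteq [G,G]$ is immediate. For the reverse inclusion, observe that in the quotient $G/[H,A]$ the images of $A$ and $H$ commute (since $[h,a]\in[H,A]$ for all $h,a$), and each is abelian by hypothesis; as these images together generate the quotient, $G/[H,A]$ is abelian, giving $[G,G]\subseteq [H,A]$. Combining this with the translated hypothesis yields $[\Hol(A,H),\Hol(A,H)]=A$. Exactly the same computation in $\Hol(A,K)$ gives $[\Hol(A,K),\Hol(A,K)]=\sum_{k\in K}(k-1)(A)\subseteq A$.

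Now comes the punchline: for any isomorphism $f:\Hol(A,H)\to\Hol(A,K)$, derived subgroups must correspond, so
\[
f(A)=f\bigl([\Hol(A,H),\Hol(A,H)]\bigr)=[\Hol(A,K),\Hol(A,K)]\subseteq A.
\]
Since $|f(A)|=|A|$ and $A$ is finite, we get $f(A)=A$, and the first part of Lemma~\ref{nicecase0} then gives $H\sim K$. I do not foresee a serious obstacle here; the only step requiring a moment's care is the identification $[G,G]=[H,A]$, which rests on $H$ being abelian (without this, one would only get an inclusion and the argument would collapse, which is consistent with Example~\ref{e9} showing that the abelian hypothesis cannot simply be dropped).
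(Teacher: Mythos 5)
Your proposal is correct and follows essentially the same route as the paper: both arguments identify the derived subgroup of $\Hol(A,H)$ with $\sum_{h\in H}(h-1)(A)=A$ (using that $H$ and $A$ are abelian), note that the derived subgroup of $\Hol(A,K)$ is likewise contained in $A$, conclude $f(A)=A$ by finiteness, and invoke the first part of Lemma~\ref{nicecase0}. Your write-up merely makes explicit the computation $[G,G]=[H,A]$ that the paper leaves implicit (one should also note in passing that $[H,A]$ is normal in $G$ so the quotient argument makes sense, which is immediate since $k\cdot\bigl((h-1)(a)\bigr)=(khk^{-1}-1)(k(a))$).
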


\begin{proof} Let $f:\Hol(A,H)\to\Hol(A,K)$ be an isomorphism. 
The stated hypotheses imply that the derived subgroup of $\Hol(A,H)$ (resp. $\Hol(A,K)$) is $A$ (resp.
a subgroup of $A$). Thus $f$ maps $A$ inside of $A$. But $A$ is finite, so $f(A)=A$ and Lemma \ref{nicecase0}
applies.
\end{proof}

Examples \ref{final} and \ref{e9} show that Corollary \ref{suma} fails if the stated condition on $H$ is removed.

\section{Isomorphism of relative holomorphs forces conjugacy of the complements}\label{positive}



We are ready to prove our main result.

\begin{theorem}\label{lindo} If $\alpha,\beta\in\GL(V)$, then $\Hol(V,\alpha)\cong\Hol(V,\beta)$ if and only if
$\langle\alpha\rangle\sim\langle\beta\rangle$.
\end{theorem}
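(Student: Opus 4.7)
The forward direction ``$\langle\alpha\rangle\sim\langle\beta\rangle\Rightarrow G_\alpha\cong G_\beta$'' is Lemma~\ref{conjugados}. For the converse, let $f:G_\alpha\to G_\beta$ be an isomorphism. Decompose $\alpha=\alpha_s\alpha_u$ and $\beta=\beta_s\beta_u$ into their commuting $p$-regular and $p$-unipotent parts, each a polynomial in the original; thus $\langle\alpha\rangle=\langle\alpha_s\rangle\times\langle\alpha_u\rangle$ (analogously for $\beta$), and comparing orders gives $o(\alpha_s)=o(\beta_s)$ and $o(\alpha_u)=o(\beta_u)$. The proof will proceed in three stages.

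First I match the Sylow and Hall subgroups. The Sylow $p$-subgroup $P_\alpha:=V\rtimes\langle\alpha_u\rangle$ is normal in $G_\alpha$ (since $\alpha$ normalises both $V$ and $\langle\alpha_u\rangle$), hence unique, so $f(P_\alpha)=P_\beta$. Since $G_\alpha$ is solvable, Schur--Zassenhaus gives conjugacy of Hall $p'$-complements; post-composing $f$ with a suitable inner automorphism of $G_\beta$ yields $f(\langle\alpha_s\rangle)=\langle\beta_s\rangle$, whence $f(\alpha_s)=\beta_s^j$ for some $j$ coprime to $o(\beta_s)$. As $\gcd(o(\beta_s),o(\beta_u))=1$, the element $\beta_s^j\beta_u$ still generates $\langle\beta\rangle$, so relabeling $\beta$ to this element leaves $G_\beta$ and $\langle\beta\rangle$ unchanged and produces $f(\alpha_s)=\beta_s$. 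Next, the restriction $f|_{P_\alpha}:V\rtimes\langle\alpha_u\rangle\to V\rtimes\langle\beta_u\rangle$ is an isomorphism of finite $p$-groups, from which the separately-treated unipotent case (Theorem~\ref{lindo2}) gives $\alpha_u\sim\beta_u$ in $\GL(V)$. Choosing $g_0\in\GL(V)$ with $g_0\alpha_u g_0^{-1}=\beta_u$ and transporting the $\beta$-side data through $g_0$, I may further assume $\alpha_u=\beta_u$ while preserving $f(\alpha_s)=\beta_s$.

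The third and most delicate stage converts $f(\alpha_s)=\beta_s$ into a $\GL(V)$-similarity $\alpha_s\sim\beta_s$ realised by an element that also centralises $\alpha_u=\beta_u$. Under $f$, conjugation by $\alpha_s$ on $P_\alpha$ is transported to conjugation by $\beta_s$ on $P_\beta$. Maschke decomposes $V$ into $F\langle\alpha_s\rangle$-isotypic components, each stable under $\alpha_u=\beta_u$ because the $s$- and $u$-parts commute; analogously for $\beta_s$. The characteristic subgroup $[\langle\alpha_s\rangle,P_\alpha]=(\alpha_s-1)V$ lies in $V$ and is sent by $f$ to $(\beta_s-1)V\leq V$, and iterating this with powers of $\alpha_s$ together with Lemma~\ref{pid} (applied with $R=F[X]$ and $q$ an irreducible factor of $m_{\alpha_s}$) pins down the similarity type of $\alpha_s$ relative to $\beta_s$. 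Assembling the compatible conjugations delivers a single $g\in\GL(V)$ with $g\langle\alpha\rangle g^{-1}=\langle\beta\rangle$.

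The main obstacle is this third stage. Since $V$ is not characteristic in $G_\alpha$, the isomorphism $f$ does not \emph{a priori} restrict to a linear map on $V$, and the similarity invariants of $\alpha_s$ must be extracted from intrinsic group-theoretic data of $G_\alpha$ subject to the already-imposed constraint $\alpha_u=\beta_u$. This interlocking is precisely why the authors describe the proof as ``somewhat subtle'' and why they preface it with the refined linear-algebra Lemmas~\ref{psimilar} and~\ref{pid}.
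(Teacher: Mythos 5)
Your stages 1 and 2 are sound and set up a route genuinely different from the paper's (which never invokes the Jordan decomposition): $P_\alpha=V\rtimes\langle\alpha_u\rangle$ is indeed the unique Sylow $p$-subgroup, Schur--Zassenhaus lets you arrange $f(\langle\alpha_s\rangle)=\langle\beta_s\rangle$ after replacing $\beta$ by another generator of $\langle\beta\rangle$, and Theorem~\ref{lindo2} applied to $f|_{P_\alpha}$ gives that $\alpha_u$ is similar to $\beta_u$. Stage 3 also does yield that $\alpha_s$ is similar to $\beta_s$: the subgroup $[H,P_\alpha]=(\alpha_s-1)V$ (for $H$ any Hall $p'$-subgroup) is characteristic and contained in $V$, $f$ restricts to a linear isomorphism of it onto $(\beta_s-1)V$ intertwining $\alpha_s$ with $\beta_s$, and Lemma~\ref{pid} with $q=X-1$ lifts this to all of $V$.

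The gap is that this is where your proof stops and where the theorem actually begins. You end up with two \emph{separate} similarities, one for the $p$-regular parts and one for the unipotent parts, and the sentence ``assembling the compatible conjugations delivers a single $g$'' is exactly the unproved assertion that they can be realised by a common conjugator. That implication is false for commuting pairs in general: take $p=2$, $V=W_1\oplus W_2$ with $\dim W_i=4$, give $W_1$ an $\mathbb{F}_4$-structure, let $\sigma$ act on $W_1$ as a primitive cube root of unity and trivially on $W_2$, let $\mu_1$ be an $\mathbb{F}_4$-regular unipotent on $W_1$ and trivial on $W_2$, and let $\mu_2$ be trivial on $W_1$ and of Jordan type $(2,2)$ on $W_2$. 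Both $\mu_i$ commute with $\sigma$ and both have Jordan type $(2,2,1,1,1,1)$ over $F$, so $\mu_1$ is similar to $\mu_2$; yet $\sigma\mu_1$ is not similar to $(\sigma\mu_2)^{\pm1}$, since the fixed spaces have dimensions $4$ and $2$. So the data you have extracted is strictly weaker than the conclusion, and what is missing is an equivariant refinement of Theorem~\ref{lindo2} recovering the pair $(\alpha_s,\alpha_u)$ up to \emph{simultaneous} conjugacy from $(P_\alpha,c_{\alpha_s})$; the Maschke/isotypic remark cannot supply it, because $f$ need not map $V$ to $V$ and hence gives no way to compare the unipotent actions on matching isotypic components. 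The paper sidesteps the gluing problem entirely: it restricts the whole of $\alpha$ to the derived subgroup $(\alpha-1)V$, writes $f(\alpha)=w\gamma$ with $\gamma\in\langle\beta\rangle$, shows $\alpha|_{(\alpha-1)V}$ is similar to $\gamma|_{(\beta-1)V}$, and then uses an order analysis together with Lemmas~\ref{psimilar} and~\ref{pid} to replace $\gamma$ by a suitable generator of $\langle\beta\rangle$ and lift the similarity to $V$ in one piece.
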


\begin{proof} For $\pi\in\GL(V)$, set $G_\pi=\Hol(V,\pi)$ and let $V_\pi=[G_\pi,G_\pi]$ be the derived subgroup of $G_\pi$.
We readily see that $V_\pi=(\pi-1)(V)$.

If $\langle\alpha\rangle\sim\langle\beta\rangle$ then $G_\alpha\cong G_\beta$ by Lemma \ref{conjugados}.
Suppose next $G_\alpha\cong G_\beta$. 

If $\gcd(p,o(\alpha))=1$ then $\langle\alpha\rangle\sim\langle\beta\rangle$ by Lemma \ref{nicecase0}.
We suppose henceforth that $p\mid o(\alpha)$. 

There is a group isomorphism, say $f:G_\alpha\to G_\beta$, sending $V_\alpha$ onto $V_\beta$. 
We have $f(\a)=w\gamma$ for unique $w\in V$ and $\gamma\in\langle\b\rangle$. It is clear that
the subspaces $V_\a$ and $V_\b$ are invariant under $\a$ and $\gamma$, respectively. A calculation analogous to (\ref{typ2}) shows that
$\alpha|_{V_\alpha}$ is similar to $\gamma|_{V_\beta}$. Indeed, let $u:V_\a\to V_\b$ be the isomorphism of $F$-vector
spaces induced by $f$. Then for
any $v\in V_\alpha$, we have
$$
u(\alpha(v))=f(\alpha(v))=f(\alpha v \alpha^{-1})=f(\alpha)f(v) f(\alpha)^{-1}=\gamma f(v) \gamma^{-1}=\gamma(f(v))=\gamma(u(v)),
$$
so that $u\alpha|_{V_\alpha} u^{-1}=\gamma|_{V_\beta}$, as claimed.


Since $f(V_\alpha)=V_\beta$, we see that $f$ also induces an isomorphism $g:G_\alpha/V_\alpha\to G_\beta/V_\beta$. As $V\cap\langle\alpha\rangle$
is trivial, we have
$$
o(\beta)=o(\alpha)=o(V_\alpha\alpha)=o(g(V_\alpha\alpha))=o(V_\beta w \gamma).
$$
Let $m$ denote this common number. Since $\gamma\in\langle\beta\rangle$, we see that
$o(\gamma)\mid m$. Set
$$
h=1+X+\cdots+X^{p-1}=\frac{X^p-1}{X-1}=\frac{(X-1)^p}{X-1}=(X-1)^{p-1}\in F[X].
$$
Then
\begin{equation}
\label{wp}
(w\gamma)^p=h(\gamma)(w)\gamma^p\text{ and }h(\gamma)(w)\in (\gamma-1)(V)\subseteq V_\beta.
\end{equation}

Suppose first that $p\mid o(\gamma)$. Then (\ref{wp}) implies $(V_\beta w \gamma)^{o(\gamma)}=V_{\beta}$. Thus $m\mid o(\gamma)$
and therefore $m=o(\gamma)$, whence $\langle\gamma\rangle=\langle\beta\rangle$. It follows that
$G_\beta=G_\gamma$ and $V_\beta=V_\gamma$. Thus $\alpha|_{V_\alpha}$ is similar to $\gamma|_{V_\gamma}$, which 
implies that $\alpha$ is similar to $\gamma$ by Lemma \ref{pid}.

Suppose next that $p\nmid o(\gamma)$. Then
$p\nmid o(\gamma|_{V_\beta})$. As $\alpha|_{V_\alpha}$ is similar to $\gamma|_{V_\beta}$, we deduce that 
$p\nmid o(\alpha|_{V_\alpha})$.

The map $f^{-1}:G_\beta \to G_\alpha$ is also an isomorphism, and $f^{-1}(\beta)=z\delta$ for unique $z\in V$ and $\delta\in 
\langle\alpha\rangle$. If $p\mid o(\delta)$ we may deduce, as above, that $\beta$ is similar to $\delta$, where 
$\langle\delta\rangle=\langle\alpha\rangle$. Suppose next that $p\nmid o(\delta)$, which implies, as above, that
$p\nmid o(\beta|_{V_\beta})$.

Now (\ref{wp}) implies $m\mid o(\gamma)p$. Since $p\mid o(\a)$, then $p$ and $o(\gamma)$ are
relatively prime factors of $m$, so $o(\gamma)p|m$ and therefore $m=o(\gamma)p$. Thus $\gamma=\beta^{pj}$, where $j\in\Z$
and $\gcd(pj,m)=p$.
We may thus write $pj=p^s k$, where $s\geq 1$ and $k\in\Z$ is relatively prime to~$m$. Therefore 
$\langle\beta\rangle=\langle\beta^k\rangle$, $V_\beta=V_{\beta^k}$, and $o(\beta|_{V_\beta})=o(\beta^k|_{V_\beta})$.
Since $\gamma=(\beta^k)^{p^s}$ and $p\nmid o(\beta^k|_{V_\beta})$, Lemma \ref{psimilar} implies that $\beta^k|_{V_\beta}$ is similar 
to~$\gamma|_{V_\beta}$. But $\gamma|_{V_\beta}$ is similar to $\alpha|_{V_\alpha}$, so 
$\beta^k|_{V_{\beta^k}}$ is similar to $\alpha|_{V_\alpha}$. Lemma \ref{pid} yields that
$\alpha$ is similar to $\beta^k$.
\end{proof}

\section{The unipotent case}\label{unicase}


For a group $G$, its lower central series $G^1,G^2,\dots$ is inductively defined by $G^1=G$ and 
$G^{i+1}=[G,G^{i}]$, $i\geq 1$. If $G^{i+1}=1$ for some $i\geq 1$, we say that $G$ is nilpotent, and the smallest such $i$ is called
the nilpotency class of $G$. It is well-known that every finite $p$-group is nilpotent. 

Given an additive abelian group $A$ and an endomorphism $\a$ of $A$, we say that $\a$ is unipotent if 
$\alpha=1+\beta$,  with $\beta\in \mathrm{End}(A)$ nilpotent.

\begin{lemma}\label{unip} Let $A$ be a non-trivial finite abelian $p$-group. Let $\alpha\in\mathrm{Aut}(A)$ and set 
$G_\alpha=\Hol(A,\alpha)$. Then
 $G_\alpha$ is nilpotent $\Leftrightarrow \alpha$ is unipotent $\Leftrightarrow$ the order of $\alpha$ is a power of $p$.
\end{lemma}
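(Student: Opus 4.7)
The plan is to prove the three conditions equivalent in a cycle by using the order of $\alpha$ as the central pivot, namely to show (i) $G_\alpha$ nilpotent $\iff$ $o(\alpha)$ is a power of $p$, and (ii) $\alpha$ unipotent $\iff$ $o(\alpha)$ is a power of $p$. Throughout I would write $|A|=p^a$ and let $p^m$ denote the exponent of $A$.

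For (i), the easy direction is $o(\alpha)$ a $p$-power $\Rightarrow$ $|G_\alpha|=|A|\cdot o(\alpha)$ is a $p$-power, so $G_\alpha$ is a finite $p$-group, hence nilpotent. For the converse, suppose $G_\alpha$ is nilpotent but $o(\alpha)$ admits a prime factor $q\ne p$. Decompose $G_\alpha$ as the direct product of its Sylow subgroups and let $Q$ be its nontrivial Sylow $q$-subgroup. Since $A$ sits inside the Sylow $p$-subgroup and distinct Sylow factors commute in a nilpotent group, $Q$ centralizes $A$. But any $x\in Q$ has the form $v\alpha^j$ with $v\in A$, $j\in\Z$, and conjugation by $x$ acts on $A$ as $\alpha^j$ (using that $A$ is abelian). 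Triviality of this action forces $\alpha^j=1$, so $x=v\in A$; as $A$ is a $p$-group and $x$ has $q$-power order, $x=1$. This contradicts $Q\ne 1$.

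For (ii), assume first that $\alpha$ is unipotent, say $\alpha=1+\beta$ with $\beta^N=0$. Then $\alpha^{p^k}=\sum_{i=0}^{N-1}\binom{p^k}{i}\beta^i$ acting on $A$, and since $v_p\!\binom{p^k}{i}=k-v_p(i)$, for $k$ large enough every coefficient with $1\le i\le N-1$ is divisible by $p^m$, so $\alpha^{p^k}=1$. Conversely, suppose $o(\alpha)$ is a power of $p$. The trick I would use is reduction modulo $p$: the induced automorphism $\overline\alpha$ of the $F$-vector space $A/pA$ has $p$-power order, hence its eigenvalues in any algebraic closure of $F$ are $p$-power roots of unity, which in characteristic $p$ are all $1$. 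Thus $\overline\alpha-1$ is nilpotent, so $(\alpha-1)^{N_0}(A)\subseteq pA$ for some $N_0$. Because $\alpha$ commutes with multiplication by $p$, the same argument applied to each $p^iA/p^{i+1}A$ gives $(\alpha-1)^{N_0}(p^iA)\subseteq p^{i+1}A$; iterating $m$ times yields $(\alpha-1)^{mN_0}=0$ on $A$.

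The main obstacle I expect is the direction ``$o(\alpha)$ a $p$-power $\Rightarrow$ $\alpha$ unipotent,'' since outside the elementary abelian case one cannot directly invoke rational canonical forms or minimal polynomial factorisations over $F[X]$. The reduction-mod-$p$ argument sidesteps this by filtering $A$ through $pA\supseteq p^2A\supseteq\cdots$ and pushing the linear-algebra argument down to each elementary abelian quotient $p^iA/p^{i+1}A$; the other steps are essentially bookkeeping about Sylow structure and $p$-adic valuations of binomial coefficients.
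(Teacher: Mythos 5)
Your proof is correct, but it is organised differently from the paper's. The paper pivots on unipotence: it first observes that the proper terms of the lower central series of $G_\alpha$ are exactly $(\alpha-1)^iA$, which gives ``nilpotent $\Leftrightarrow$ unipotent'' in one stroke, then closes the cycle with ``$p$-power order $\Rightarrow$ $p$-group $\Rightarrow$ nilpotent'' and ``unipotent $\Rightarrow$ $p$-power order'' (the latter by induction on the nilpotency degree of $\beta=\alpha-1$, using $p^\ell\mid\binom{p^\ell}{i}$). You instead pivot on the order of $\alpha$ and never touch the lower central series: your ``nilpotent $\Rightarrow$ $p$-power order'' goes through the Sylow decomposition of a finite nilpotent group (the nontrivial Sylow $q$-subgroup would have to centralize $A$, forcing it into $A$ and hence to be trivial), and your ``$p$-power order $\Rightarrow$ unipotent'' is a genuinely new implication the paper never proves directly, obtained by reducing mod $p$ (where $X^{p^k}-1=(X-1)^{p^k}$ forces $\overline\alpha-1$ nilpotent on $A/pA$) and then iterating along the filtration $A\supseteq pA\supseteq\cdots$; your ``unipotent $\Rightarrow$ $p$-power order'' is the paper's binomial-coefficient argument done in one shot via the valuation $v_p\binom{p^k}{i}=k-v_p(i)$ rather than by induction. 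Both routes are sound. The paper's choice is economical in context because the identification of the lower central series with the chain $(\alpha-1)^iV$ is reused in Theorem~\ref{lindo2}; your version proves four implications where three would close the cycle, but each is direct and self-contained, and the reduction-mod-$p$ step is a clean way to see $p$-power order $\Rightarrow$ unipotent without canonical forms, exactly as you anticipated.
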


\begin{proof} It is readily seen that the proper terms of the lower central series of $G_\alpha$ are $(\alpha-1)^i A$, $i\geq 1$,
so  $G_\alpha$ is nilpotent $\Leftrightarrow \alpha$ is unipotent. If the order of $\alpha$ is a power of $p$, then $G_\alpha$
is a $p$-group, and hence nilpotent. Suppose $\alpha$ is unipotent, so $\alpha=1+\beta$,  with $\beta\in \mathrm{End}(A)$
nilpotent, and therefore $\beta^{p^m}=0$ for some $m\geq 0$. We show by induction on $m$ that this implies that the order of $1+\beta$ 
is a power of~$p$. The case $m=0$ is trivial.  Suppose $m>0$ and the result is true for $m-1$. We have $p^\ell A=0$ for some $\ell\geq 1$.
Since $p^\ell\mid \binom{p^\ell}{i}$
for $0<i<p$, we have $(1+\beta)^{p^\ell}=1+\beta^p\gamma$ for some $\gamma\in \Z[\beta]$. As $(\beta^p\gamma)^{p^{m-1}}=0$,
the order of $(1+\beta)^{p^\ell}$ is a power of $p$, and hence so is that of $1+\beta$.
\end{proof}

\begin{theorem}\label{lindo2} Suppose $\alpha,\beta\in\GL(V)$ are unipotent and that $\Hol(V,\alpha)\cong\Hol(V,\beta)$. Then
$\alpha$ is similar to $\beta$.
\end{theorem}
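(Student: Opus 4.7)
The plan is to recover the Jordan block structure of the unipotent operator $\alpha$ from group-theoretic data intrinsic to $G_\alpha=\Hol(V,\alpha)$, in a form that must be preserved by any isomorphism $G_\alpha\cong G_\beta$. By Lemma~\ref{unip}, both $G_\alpha$ and $G_\beta$ are finite nilpotent $p$-groups, so their lower central series are finite and consist of characteristic subgroups; any isomorphism therefore sends $G_\alpha^i$ to $G_\beta^i$ for every $i\geq 1$.

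The key computation is to verify, by induction on $i$, the identity
\[
G_\alpha^i=(\alpha-1)^{i-1}V\qquad\text{for all }i\geq 2.
\]
The base case $i=2$ is the identification $[G_\alpha,G_\alpha]=(\alpha-1)V$ already recorded in the proof of Theorem~\ref{lindo}. For the inductive step, I would combine the commutator identity $[v\alpha^k,w]=(\alpha^k-1)(w)$, valid for $v,w\in V$ and $k\in\Z$, with the factorization $\alpha^k-1=(\alpha-1)(\alpha^{k-1}+\cdots+1)$ and the $\alpha$-invariance of $(\alpha-1)^{i-1}V$, to obtain $[G_\alpha,G_\alpha^i]=(\alpha-1)G_\alpha^i=(\alpha-1)^iV$.

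Once this is established, an isomorphism $G_\alpha\cong G_\beta$ yields
\[
\dim_F(\alpha-1)^{k}V=\dim_F(\beta-1)^{k}V\qquad\text{for every }k\geq 0,
\]
with the case $k=0$ holding trivially since both operators act on the same $V$. The Jordan block structure of a unipotent operator on a finite-dimensional $F$-vector space is the partition of $\dim V$ whose conjugate is the sequence $(\dim(\alpha-1)^{k-1}V-\dim(\alpha-1)^{k}V)_{k\geq 1}$ of numbers of Jordan blocks of size at least $k$. Equality of these sequences for $\alpha$ and $\beta$ therefore forces the two partitions to agree, and hence $\alpha$ is similar to $\beta$.

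I do not anticipate a serious obstacle: the only mildly technical step is the inductive identification of the lower central series, and everything downstream is a standard fact about unipotent operators over $F$. The transparency promised in the introduction comes from the fact that, in the unipotent case, one never needs the subtle order-theoretic bookkeeping that powers the proof of Theorem~\ref{lindo}.
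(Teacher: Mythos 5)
Your proposal is correct and follows essentially the same route as the paper: identify the proper terms of the lower central series of $\Hol(V,\alpha)$ with the chain $(\alpha-1)^iV$, observe that an isomorphism must carry these characteristic subgroups of $G_\alpha$ onto those of $G_\beta$ and hence match their dimensions, and then recover the Jordan type of the unipotent operator from that sequence of dimensions. The only difference is cosmetic: the paper extracts the block multiplicities $e_i=f_i$ by inverting an explicit triangular system, whereas you quote the standard conjugate-partition description of the Jordan type in terms of $\dim(\alpha-1)^{k-1}V-\dim(\alpha-1)^{k}V$.
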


\begin{proof} Set $G_\alpha=\Hol(V,\alpha)$ and $G_\beta=\Hol(V,\beta)$. Then $G_\alpha$ and $G_\beta$ are nilpotent
by Lemma \ref{unip}, and we let $m$ be the common nilpotency class
of $G_\alpha$ and $G_\beta$.

As indicated earlier, the proper terms of the lower central series of $G_\alpha$ (resp. $G_\beta$)
are $(\alpha-1)^i V$ (resp. $(\beta-1)^i V$), $i\geq 1$.

There is basis of $V$ relative to which the matrix of $\alpha$ (resp. $\beta$) is
the direct sum of $e_1,\dots,e_m$ (resp. $f_1,\dots,f_m$) Jordan blocks with eigenvalue 1 of sizes $1,\dots, m$, respectively.  Thus
$$e_1+2e_2+\cdots+m e_m=\dim V=f_1+2f_2+\cdots+m f_m.
$$

This yields a basis of $(\alpha-1)V$ (resp. $(\beta-1)V$) relative to which the matrix of $\alpha$ (resp. $\beta$) is
the direct sum of $e_2,\dots,e_m$ (resp. $f_2,\dots,f_m$) Jordan blocks with eigenvalue 1 of sizes $1,\dots, m-1$. Since
$[G_\alpha,G_\alpha]=(\alpha-1)V$ (resp. $[G_\beta,G_\beta]=(\beta-1)V$), it follows that
$$e_2+2e_3+\cdots+(m-1) e_m=\dim (\alpha-1)V=\dim (\beta-1)V=f_2+2f_3+\cdots+(m-1) f_m.
$$
Continuing in this way we see that the column vector with entries $e_1-f_1,\dots,e_m-f_m$ is annihilated by
the upper triangular matrix with equal entries along each superdiagonal, these entries being $1,2,\dots,m$, 
in each successive superdiagonal.
As this matrix is invertible, we see that $e_1=f_1,\dots, e_m=f_m$, whence $\alpha$ is similar to $\beta$.
\end{proof}



\begin{prop}\label{abe} Suppose $H$ and $K$ are abelian subgroups of $\GL(V)$ such that $\Hol(V,H)\cong\Hol(V,K)$. Then $H\cong K$.
\end{prop}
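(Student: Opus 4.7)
The plan is to show that the isomorphism type of $H$ as an abstract group is recoverable from the relative holomorph $G_H:=\Hol(V,H)=V\rtimes H$; applying the same extraction to $G_K$ and invoking the hypothesised isomorphism $G_H\cong G_K$ will then deliver $H\cong K$. Since $H$ is abelian, a direct commutator calculation gives $[G_H,G_H]=[V,H]=\sum_{h\in H}(h-1)V=:W_H$, and since $H$ acts trivially on $V/W_H$, the abelianization splits as an internal direct product
\[
G_H/[G_H,G_H]\;\cong\;(V/W_H)\times H.
\]
The parallel identity for $K$, combined with $G_H\cong G_K$, yields an isomorphism of finite abelian groups
\[
(V/W_H)\times H\;\cong\;(V/W_K)\times K,
\]
while counting orders forces $|H|=|K|$.

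Next I pass to primary components. For every prime $q\neq p$, the $q$-primary parts of $V/W_H$ and $V/W_K$ vanish, since both are elementary abelian $p$-groups; comparing $q$-components in the displayed isomorphism therefore gives $H_q\cong K_q$ for every $q\neq p$. Combined with $|H|=|K|$, this also forces $|H_p|=|K_p|$, where $H_p$ (resp.\ $K_p$) denotes the Sylow $p$-subgroup of $H$ (resp.\ $K$).

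The only step that is not pure bookkeeping is to recover $H_p$ from the $p$-primary piece
\[
(V/W_H)\times H_p\;\cong\;(V/W_K)\times K_p.
\]
For a finite abelian $p$-group $A$, the integers $d_i(A):=\dim_F(p^{i-1}A/p^iA)$ ($i\geq 1$) determine $A$ up to isomorphism and satisfy $\sum_{i\geq 1}d_i(A)=\log_p|A|$. Since $V/W_H$ and $V/W_K$ are annihilated by $p$, they contribute to $d_1$ only, so the displayed isomorphism forces $d_i(H_p)=d_i(K_p)$ for every $i\geq 2$. Summing these equalities and using $|H_p|=|K_p|$ then pins down $d_1(H_p)=d_1(K_p)$ as well, so $H_p\cong K_p$. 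Together with $H_q\cong K_q$ for $q\neq p$, this produces $H=H_p\times\prod_{q\neq p}H_q\cong K_p\times\prod_{q\neq p}K_q=K$.

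The main obstacle I foresee is exactly this $p$-primary comparison: a priori $V/W_H$ and $V/W_K$ can have different $F$-dimensions and so contaminate the $p$-part of the abelianization by unequal amounts. What rescues the argument is that such contamination is confined to the bottom layer $d_1$ of the invariant-factor sequence, so all higher $d_i$ of $H_p$ and $K_p$ match automatically, and the global identity $|H_p|=|K_p|$ forces the bottom layer to agree too.
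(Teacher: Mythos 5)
Your proof is correct and follows essentially the same route as the paper: both pass to the abelianization $G_H/[G_H,G_H]\cong (V/W_H)\times H$ and then cancel the $V$-contribution against the corresponding one for $K$. The only difference is in how the cancellation is executed: the paper first observes that the isomorphism carries $[G_H,G_H]$ onto $[G_K,G_K]$, so $V/W_H$ and $V/W_K$ are $F$-spaces of equal dimension, and then invokes the uniqueness part of the fundamental theorem of finite abelian groups, whereas you reach the same conclusion by comparing the invariants $d_i$ together with $|H_p|=|K_p|$ --- a self-contained variant of the same cancellation step.
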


\begin{proof} Set $V_H=[\Hol(V,H),\Hol(V,H)]$ and $V_K=[\Hol(V,K),\Hol(V,K)]$, which are subgroups of $V$. Let $f:\Hol(V,H)\to \Hol(V,K)$
be an isomorphism. Then $f$ restricts to an isomorphism between $V_H$ and~$V_K$. This yields an isomorphism between
$\Hol(V,H)/V_H$ and $\Hol(V,K)/V_K$. But $$\Hol(V,H)/V_H\cong (V/V_H)\times H,\; \Hol(V,K)/V_K\cong (V/V_K)\times K.$$
Here $V/V_H\cong V/V_K$, as both are $F$-vector spaces of the same dimension, 
so the uniqueness part of the fundamental theorem of finite abelian groups, yields $H\cong K$. 
\end{proof}

\section{Examples}\label{secexa} 


Given a group $G$ and $x,y\in G$, we set ${}^x y=xyx^{-1}$. If $H$ and $N$ are groups such that $H$ acts on~$N$ by automorphisms
via a homomorphism $T:H\to\Aut(N)$, we may consider the semidirect product $N\rtimes_{T} H$, where
$$
{}^h x=hxh^{-1}=T(h)(x),\quad h\in H,x\in N.
$$

There is a slight generalization of Theorem \ref{lindo} to a semidirect product
$V\rtimes_T\langle \a\rangle$, where the action of $\langle \a\rangle$ on $V$ is not necessarily faithful.
We omit the details of the proof, which is similar to that of Theorem \ref{lindo} but notationally more complicated.

We let $U_n(p)$ stand for the subgroup of $\GL_n(p)$ consisting of 
all upper triangular matrices with 1's along the  main diagonal. For later reference 
we observe that $U_3(p)=\mathrm{Heis}(p)$, the Heisenberg group over $F$.

Throughout this section, we set $R=\Z/p^m\Z$, where $m\geq 1$, and let $U$ be a free 
$R$-module of finite rank $n>0$. Given a subgroup $H$ of $\GL_n(R)$, taking $U=R^n$,
we may consider the semidirect product $U\rtimes_T H$, where $T$ is the homomorphism $H\hookrightarrow \GL_n(R)\to\GL(U)$,
and $\GL_n(R)\to\GL(U)$ is the isomorphism associated to the canonical basis of $U$, so that ${}^h v=h\cdot v$ is the product
of the $n\times n$ matrix $h$ by the vector $v$ of length $n$. It is clear that $U\rtimes_T H\cong \Hol(U,T(H))$,
and we will write $\Hol(U,H)=U\rtimes_T H$ from now on.

\begin{lemma}\label{matriz} Let $H$ be a subgroup of $\GL_n(R)$ and set $U=R^n$. Suppose there are subgroups $W$ and~$K$ of $\Hol(U,H)$
such that $W$ is normal, $W\cong U$, $\Hol(U,H)=WK$, and $W\cap K$ is trivial. Assume that the homomorphism $K\to\GL(W)$ arising from 
the conjugation action of $K$
on $W$ is faithful. Fix an $R$-basis $B=\{w_1,\dots,w_n\}$ of $W$, and let $L$ be the image of the corresponding homomorphism 
$v:K\to\GL(W)\to\GL_n(R)$.
Then $\Hol(U,H)\cong\Hol(U,L)$.
\end{lemma}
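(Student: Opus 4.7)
My strategy is to recognize $\Hol(U,H)$ as the \emph{internal} semidirect product $W\rtimes K$ supplied by the hypotheses, and then to transport this to the \emph{external} semidirect product $U\rtimes_T L=\Hol(U,L)$ using the basis $B$.

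First I would note that the conditions on $W$ and $K$ (normality, $W\cap K$ trivial, $\Hol(U,H)=WK$) identify $\Hol(U,H)$ with $W\rtimes K$, where $K$ acts on the abelian group $W$ by conjugation; every element of $\Hol(U,H)$ is uniquely a product $wk$ with $w\in W$ and $k\in K$. The faithfulness hypothesis embeds $K$ into $\GL(W)$ via this action.

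Next I would use the basis $B$ to convert the intrinsic pair $(W,K\hookrightarrow\GL(W))$ into matrix data over $R$. Since $W\cong U=R^n$ as abelian groups and $p^m W=0$, the additive group $W$ carries a canonical $R$-module structure, and every abelian-group endomorphism of $W$ is automatically $R$-linear (because $R=\Z/p^m\Z$ is a quotient of $\Z$). Thus $B$ yields an $R$-module isomorphism $\phi:W\to U$ with $w_i\mapsto e_i$, together with a matrix-assignment isomorphism $\Psi:\GL(W)\to\GL_n(R)$. By the very definition of $v$, the composite $\Psi\circ(\text{conjugation})$ has image exactly $L$, so $v:K\to L$ is an isomorphism, and for all $k\in K$, $w\in W$ one has the compatibility
$$
\phi(kwk^{-1})=v(k)\cdot\phi(w).
$$

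Finally, I would define $\Phi:\Hol(U,H)\to\Hol(U,L)$ by $\Phi(wk)=\phi(w)v(k)$ on the unique factorizations. Bijectivity is immediate from the bijectivity of $\phi$ and $v$ together with the uniqueness of decompositions on each side. The homomorphism property reduces, after multiplying out on both sides, to the compatibility displayed above combined with the semidirect product rule in $\Hol(U,L)$. The whole argument is essentially a formal unpacking of definitions; the only subtle point, which is conceptual rather than computational, is the automatic $R$-linearity of endomorphisms of $W$, which is what allows the abstract action $K\to\GL(W)$ to be encoded by an honest subgroup $L$ of $\GL_n(R)$.
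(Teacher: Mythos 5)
Your proposal is correct and follows essentially the same route as the paper: identify $\Hol(U,H)$ with the internal semidirect product $W\rtimes K$, use the coordinate map $w\mapsto [w]_B$ as the isomorphism $W\to U$, and check that the map $wk\mapsto [w]_B\,v(k)$ respects the conjugation actions via the compatibility $[{}^k w]_B=v(k)\cdot[w]_B$, which holds by the definition of the matrix of a linear map. Your additional remark that every abelian-group endomorphism of $W$ is automatically $R$-linear (since $R$ is a quotient of $\Z$) is a correct and worthwhile justification of a point the paper leaves implicit.
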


\begin{proof} For $w\in W$, let $[w]_B\in U$ be the coordinates  of $w$ relative to $B$. Thus, the map $u:W\to U$,
given by $w\mapsto [w]_B$, is an isomorphism. By assumption, the map $K\to L$, given by $k\mapsto v(k)$ is an isomorphism.
Consider the map $f:W\rtimes K\to\Hol(U,L)$ given by $wk\mapsto u(w)v(k)$. We claim that $f$ is an isomorphism.
It suffices to verify that $f$ maps ${}^k w$ into ${}^{v(k)} u(w)$, i.e.,  that
$[{}^k w]_B=v(k)\cdot [w]_B$. To see this, observe that by definition, if $k\in K$
and $1\leq j\leq n$, then $${}^k w_i=\underset{1\leq j\leq n}\sum v(k)_{ji}w_j,$$ so that
$[{}^k w_i]_B$ is the $i$th column of $v(k)$, that is, $[{}^k w_i]_B=v(k)\cdot [w_i]_B$. As $K$ acts linearly on $W$,
it follows that $[{}^k w]_B=v(k)\cdot [w]_B$. 
\end{proof}

\begin{exa}\label{final} Take $p=2$ and $m=3$, so that $R=\Z/8\Z$, and further take $n=4$ and $U=R^4$, whose elements are viewed as column vectors. Let 
$$
A=\begin{pmatrix} 3 & -1 & 1 & -2\\0 & 3 & -3 & 1\\0 & 3 & 4 & 3\\2 & 0 & -2 & 3\end{pmatrix}\in\GL_4(R),
$$
and set $G=\Hol(U,A)=U\rtimes\langle A\rangle$, where
$
AvA^{-1}=A\cdot v,
$
the product of $A$ by the column vector $v\in U$. Then for $S\in \GL_4(R)$, we have $G\cong\Hol(U,S)$
if and and only if $\langle S\rangle\sim\langle A\rangle$ or $\langle S\rangle\sim\langle B\rangle$, where
$\langle B\rangle\not\sim\langle A\rangle$ and
\begin{equation}
\label{defb}
B=\begin{pmatrix} -1 & -2 & 2 & 4\\0 & 3 & -3 & 1\\0 & 3 & 4 & 3\\1 & 0 & -2 & 3\end{pmatrix}\in\GL_4(R).
\end{equation}

Indeed, for $M\in\GL_4(R)$ we denote by $\overline{M}$ the image of $M$ under the canonical
projection $\GL_4(R)\to \GL_4(F)$. The characteristic polynomial of $\overline{A}$
is $f(X)=(X+1)^2(X^2+X+1)$. Thus the minimal polynomial of $\overline{A}$ is $f(X)$ or
$g(X)=(X+1)(X^2+X+1)=X^3+1$. In the latter case $\overline{A}$ has order 3, which is easily seen to be false.
Thus the minimal polynomial of $\overline{A}$ is $f(X)$. As the degree of $f(X)$ is the size of $\overline{A}$, it follows that 
$\overline{A}$ is similar to the companion matrix of $f(X)$, or the direct sum of the companion matrices of
$f_1(X)=X^2+1$ and $f_2(X)=X^2+X+1$. Thus the order of $\overline{A}$ is 6, whence the order of $A$ is a multiple of 6.
Exactly the same comments apply to~$B$. In particular, $\overline{A}$ is similar to $\overline{B}$. But
$\langle A\rangle\not\sim\langle B\rangle$, since the determinant of $A$ is -1 and that of $B$ is 3. 
Thus, every odd power of $B$ has determinant 3,
so no odd power of $B$ is similar to~$A$. Therefore
$\langle A\rangle$ and $\langle B\rangle$ are not conjugate in $\GL_4(R)$. 

We next show that $\Hol(U,A)\cong \Hol(U,B)$. For this purpose, note that
$$
A^2=\begin{pmatrix} -3 & -3 & -2 & -2\\2 & 0 & 1 & -3\\-2 & -3 & 1 & 0\\4 & 0 & 4 & -1\end{pmatrix},
A^3=\begin{pmatrix} 3 & 4 & 2 & -1\\0 & 1 & 4 & -2\\2 & 4 & 3 & 4\\2 & 0 & -2 & 1\end{pmatrix},
A^6=\begin{pmatrix} 3 & 0 & -2 & 4\\4 & 1 & 4 & 4\\4 & 0 & -3 & -2\\4 & 0 & 4 & -1\end{pmatrix},
$$
which confirms that the order of $\overline{A}$ is 6. Moreover,
$$
A^{12}=\begin{pmatrix} 1 & 0 & 0 & 4\\0 & 1 & 0 & 0\\0 & 0 & 1 & 0\\0 & 0 & 0 & 1\end{pmatrix},
$$
so the order of $A$ is 24. Set $G=\Hol(U,A)$,
$$
x=\begin{pmatrix} -1 \\0\\0\\ 1\end{pmatrix}\in U,\; y=(x,A^{12})\in G,
$$
where this notation will be used to avoid confusion when dealing with elements of $G$ which are neither in $U$ nor in $\langle A\rangle$.
We further let $W$ be the subgroup of $G$ generated by $[G,G]$ and $y$. As $W$ contains $[G,G]$, it is a normal
subgroup of $G$. We claim that $W$ is abelian and, in fact, isomorphic to $U$. To see this, note that
$[G,G]$ is the subgroup of $U$ generated by the columns of $A-1$, say $f_1,f_2,f_3,f_4$. Thus $W$ is abelian
if and only if $A^{12}$ commutes with $[G,G]$, which means that $A^{12}$
fixes the columns of $A-1$, that is,
$$
(A^{12}-1)(A-1)=0,
$$
which is true as the fourth row of $A-1$ is annihilated by 4. Row reducing $A-1$ we find
that 
$$
\langle f_1,f_2,f_3,f_4\rangle=\langle f_1\rangle\oplus \langle f_2\rangle\oplus \langle f_3\rangle\oplus \langle f_4\rangle,
$$
where $f_2,f_3,f_4$ have order 8 and $f_1$ has order 4. In particular, we have that $[G,G]\cong C_8^3\times C_4$ and 
$G/[G,G]\cong C_2\times C_{24}$.
Moreover, 
$$
y^2=(x,A^{12})^2=x+A^{12}\cdot x=(1+A^{12})\cdot x=f_1,
$$ 
whence $W\cong U$ and $W/[G,G]\cong C_2$. Note that $W$ is complemented by $K=\langle A\rangle$ in $G$, since their intersection is trivial,
so their product has the right order. Also $x,f_2,f_3,f_4$ are the columns of an invertible matrix, say $Q$, so they generate $U$.

The conjugation action of $\langle A\rangle$
on $W$ is faithful, for if $A^i$ acts trivially on $W$, then $(A^i-1)Q=0$, whence $A^i=1$.

Let $M$ be the matrix of the conjugation action of $A$ on $W$ relative to the $R$-basis $\{y,f_2,f_3,f_4\}$. We claim that $M=B$,
as given in (\ref{defb}). Indeed, denoting by $C_i(P)$ the $i$-column of a matrix $P$,
$$
{}^A f_i=A\cdot f_i=A\cdot C_i(A-1)=C_i(A(A-1))=(A-1) C_i(A)=A_{1i} f_1+A_{2i} f_2+A_{3i} f_3+ A_{4i} f_4,
$$
for all $i\in\{2,3,4\}$. Recalling that $y^2=f_1$, it follows that $M$ and $B$ share the last 3 columns.
Let us verify that $M$ and $B$ share the first column. To see this, observe that
$${}^A y=(A\cdot x, A^{12})=((A-1)\cdot x,1)y.
$$
Here the definition of $x$ gives
$$
(A-1)x= -f_1+f_4,
$$
where $f_1=y^2$, so ${}^A y=y^{-2}f_4 y=y^{-1}f_4$, so the first columns of $M$ and $B$ are identical. It follows from Lemma \ref{matriz} that
$\Hol(U,A)\cong\Hol(U,B)$.

Let $S\in\GL_4(R)$ and suppose that $H=\Hol(U,S)$ is isomorphic to $G$. We proceed to show that
$\langle S\rangle\sim\langle A\rangle$ or $\langle S\rangle\sim\langle B\rangle$. By hypothesis,
we have an isomorphism $\Delta:H\to G$.
Here $U$ is a normal
subgroup of $H$ containing $[H,H]$; $U$ is complemented in $H$ by the cyclic subgroup $\langle S\rangle$ 
of order 24; and relative to the canonical basis of $U$, the matrix of the action of $S$ on $U$ by conjugation is $S$.

Set $N=\Delta(U)$ and $T=\Delta(S)$. Then $N$ is a normal subgroup of $G$ isomorphic to $U$ containing $[G,G]$; 
$N$ is complemented in $G$ by the cyclic subgroup $\langle T\rangle$ of order 24;
and relative to some $R$-basis of~$N$, the matrix of the action of $T$ on $N$ by conjugation is
$S$.

As indicated above, $[G,G]\cong C_8^3\times C_4$, so $N/[G,G]\cong C_2$. As $G/[G,G]\cong C_2\times C_{24}$,
there are exactly 3 normal subgroups of $G$ containing $[G,G]$
as a subgroup of index 2, and $N$ must be one of them. The first possibility is 
$N=U$, in which case $\langle S\rangle\sim\langle A\rangle$ by Lemma \ref{nicecase0}.
The second possibility is
$$
N=[G,G]\times\langle A^{12}\rangle\cong C_8^3\times C_4\times C_2\not\cong U,
$$
which cannot be. It remains to analyze the third possibility, namely $N=W$. Since $G=W\rtimes\langle T\rangle$, the order of $T$
modulo $W$ is also 24. As $G=W\rtimes\langle A\rangle$, it follows that
$T=w A^i$, where $w\in W$ and $i\in\Z$ is relatively prime to 24. Since $M=B$, the conjugation action of $T$ on $W$ relative to the $R$-basis
$\{y,f_2,f_3,f_4\}$ of $W$ is $B^i$. But relative to some $R$-basis of $W$, the matrix of the conjugation action of $T$ on $W$ is $S$.
Thus $S$ is similar to $B^i$ with $\gcd(24,i)=1$, as required.
\end{exa}

\begin{exa}\label{e9}  Suppose that $n\geq 6$ and set $V=F^n$. Then there are abelian subgroups $H$ and $L$ of $U_n(p)$
such that $\Hol(V,H)\cong \Hol(V,L)$ but $H\not\sim L$ in $\GL_n(p)$.

Indeed, suppose first that $n=6$ and let $\{v_1,v_2,v_3,v_4,v_5,v_6\}$ be the canonical basis of $V$.
For $A\in M_3(F)$ set
$$
S_A=\begin{pmatrix} I_3 & A\\0 & I_3\end{pmatrix}\in U_6(p),
$$
and let
$$
A_1=\begin{pmatrix} 1 & 0 & 0 \\0 & 1 & 0\\0 & 0 & 1\end{pmatrix},
A_2=\begin{pmatrix} 0 & 1 & 0 \\0 & 0 & 0\\0 & 0 & 0\end{pmatrix},
A_3=\begin{pmatrix} 0 & 0 & 1 \\0 & 0 & 0\\0 & 0 & 0\end{pmatrix},
$$
$T_H$
the additive subgroup of $M_3(F)$ generated by $A_1,A_2,A_3$, and
$H$ the subgroup of $U_6(p)$ generated by $S_{A_1},S_{A_2},S_{A_3}$.

Let $W$ be the subgroup of $\Hol(V,H)$ generated by 
$v_1,v_2,v_3,S_{A_1},S_{A_2},S_{A_3}$. Then $W\cong V$. Moreover, $W$ is a normal
subgroup of $\Hol(V,H)$. Let $K$ be the subgroup of $\Hol(V,H)$ generated $v_4,v_5,v_6$. We see that $K\cong H$ and  
$\Hol(V,H)=W\rtimes K$. When $K$ acts on $W$ by conjugation, the matrices corresponding to the actions of $v_4,v_5,v_6$ relative
to the basis $\{v_1,v_2,v_3,S_{A_1},S_{A_2},S_{A_3}\}$ are respectively equal to $S_{B_1},S_{B_2},S_{B_3}$,
where $B_i$ is the opposite of the matrix formed by the $i$th columns of $A_1,A_2,A_3$, in that order, for $i\in\{1,2,3\}$.
Thus
$$
B_1=\begin{pmatrix} -1 & 0 & 0 \\0 & 0 & 0\\0 & 0 & 0\end{pmatrix},
B_2=\begin{pmatrix} 0 & -1 & 0 \\-1 & 0 & 0\\0 & 0 & 0\end{pmatrix},
B_3=\begin{pmatrix} 0 & 0 & -1 \\0 & 0 & 0\\-1 & 0 & 0\end{pmatrix},
$$
so the action of $K$ on $W$ is faithful. Let $L$ be the subgroup of $U_6(p)$ generated by $S_{B_1},S_{B_2},S_{B_3}$, so
that $\Hol(V,H)\cong\Hol(V,L)$ by Lemma \ref{matriz}. Let $T_L$
the additive subgroup of $M_3(p)$ generated by $B_1,B_2,B_3$.

Suppose, if possible, that $H\sim L$ in $\GL_6(p)$. Then, there is $X\in\GL_6(p)$ such that $X H X^{-1}=L$. 
Thus $X$ gives rise to the isomorphism
$f:\Hol(V,H)\to\Hol(V,L)$ given by $vh\mapsto (X\cdot v)(X h X^{-1})$. Then $f$ must map the center of $\Hol(V,H)$
onto the center of $\Hol(V,L)$. But both centers are equal to $\langle v_1,v_2,v_3\rangle$,
so
$$
X=\begin{pmatrix} Y & Q\\0 & Z\end{pmatrix},
$$
where $Y,Z\in\GL_3(p)$ and $Q\in M_3(F)$. Then $X H X^{-1}=L$ gives $Y T_H Z^{-1}=T_L$. But all matrices in $T_L$ have rank at most 2, whereas $T_H$ has a matrix of rank 3.
We deduce that $H\not\sim L$ in $\GL_6(p)$.

In general, take $m=\lceil n/2\rceil$, for $A\in M_m(F)$ set
$$
S_A=\begin{pmatrix} I_m & A\\0 & I_{n-m}\end{pmatrix}\in U_n(p),
$$
and let $A_1,\dots, A_{n-m}\in M_{m,n-m}(p)$ be defined as follows: $A_1=\mathrm{diag}(1,\dots,1)$, where the number of 1's is $n-m$,
$A_2=E^{1,2},\dots, A_{n-m}=E^{1,n-m}$, where 
$E^{i,j}$ is the matrix having a 1 in position $(i,j)$
and 0's elsewhere. We can then continue as above, noting that 
all matrices in $T_L$ have rank at most 2, whereas $T_H$ has a matrix of rank $n-m\geq 3$.

\end{exa}

We will refer to a group $G$ as admitting if there exist non-conjugate subgroups $H$ and $K$ of $\mathrm{Aut}(G)$ such that
$\Hol(N,H)\cong\Hol(N,K)$. We will say that $G$ as highly admitting if there exist non-isomorphic subgroups $H$ and $K$ of $\mathrm{Aut}(G)$ such that $\Hol(N,H)\cong\Hol(N,K)$.

\begin{exa}\label{e3} Suppose that $p$ is odd and $n\geq 2$. Then $V$ is highly admitting.

Indeed, suppose first $n=2$ and set $V=F^2$. Let $\{v_1,v_2\}$ be the canonical basis of $V$
and let $H$ be the subgroup of $\GL_2(p)$ generated by
 $$
A=\begin{pmatrix} -1 & 0\\ 0 & 1\end{pmatrix},
B=\begin{pmatrix} 1 & 1\\ 0 & 1\end{pmatrix}.
$$
Since $o(B)=p$, $o(A)=2$, and ${}^A B=B^{-1}$, we see  that $H$ is the dihedral group of order $2p$.

Let $W$ be the subgroup of $\Hol(V,H)$ generated by $v_1,B$. Then $W\cong V$. Moreover, $W$ is a normal
subgroup of $\Hol(V,H)$. Let $K$ be the subgroup of $\Hol(V,H)$ generated by $v_2,A$. Clearly $K\cong C_{2p}$
is not isomorphic to $H$, and  $\Hol(V,H)=W\rtimes K$. When $K$ acts on $W$ by conjugation, the matrices corresponding to the actions of 
$v_2$ and $A$ relative
to the basis $\{v_1,B\}$ are respectively equal to
$$
\begin{pmatrix} 1 & -1\\ 0 & 1\end{pmatrix},
\begin{pmatrix} -1 & 0\\ 0 & -1\end{pmatrix}.
$$
The subgroup of $\GL_2(p)$ generated by these matrices, say $L$, is isomorphic to $C_{2p}$, so the action of $K$ on $W$
is faithful. It follows from Lemma \ref{matriz} that $\Hol(V,L)\cong\Hol(V,H)$, even though $L$ is not isomorphic to $H$.

The general case when $\{v_1,\dots,v_n\}$ is the canonical basis of $V$ follows by extending $A,B$ so that they fix $v_3,\dots,v_n$.

Besides proving that $V$ is highly admitting when $n\geq 2$ and $p$ is odd, this example shows that even though 
$\Hol(V,H)\cong\Hol(V,L)$ and the Sylow $p$-subgroup of $H$ is conjugate to the Sylow $p$-subgroup of~$L$,
this conjugation cannot be extended to all of $H$ and $L$.
\end{exa}



We next provide an analogue of Example \ref{e3} when $p=2$, provided $n\geq 4$.

\begin{exa}\label{e7} Suppose that $n\geq 4$ and $p=2$. Then $V$ is highly admitting.

Indeed, suppose first $n=4$ and set $V=F^4$. Let $\{v_1,v_2,v_3,v_4\}$ be the canonical basis of $V$ and
let $H$ be the subgroup of $\GL(V)$ generated by
$$
X=\begin{pmatrix} 0 & 0 & 0 & 1\\0 & 0 & 1 & 0\\0 & 1 & 0 & 0\\1 & 0 & 0 & 0\end{pmatrix},
Y=\begin{pmatrix} 0 & 0 & 0 & 1\\0 & 0 & 1 & 1\\1 & 1 & 0 & 0\\1 & 0 & 0 & 0\end{pmatrix},
Z=\begin{pmatrix} 0 & 0 & 0 & 1\\1 & 0 & 1 & 1\\1 & 1 & 0 & 1\\1 & 0 & 0 & 0\end{pmatrix}.
$$
Then $H\cong C_2^3$.
Let $W$ be the subgroup of $\Hol(V,H)$ generated by $v_1+v_4,v_2+v_3,X,Z$. Then $W\cong V$. Moreover, $W$ is a normal
subgroup of $\Hol(V,H)$. Let $K$ be the subgroup of $\Hol(V,H)$ generated by $v_1,Y$. We see that $K\cong D_{8}$ and  
$\Hol(V,H)=W\rtimes K$. When $K$ acts on $W$ by conjugation, the matrices corresponding to the actions of $v_1,Y$ relative
to the basis $\{v_1+v_4,v_2+v_3,X,Z\}$ are respectively equal to
$$
\begin{pmatrix} 1 & 0 & 1 & 1\\0 & 1 & 0 & 1\\0 & 0 & 1 & 0\\0 & 0 & 0 & 1\end{pmatrix},
\begin{pmatrix} 1 & 0 & 0 & 0\\1 & 1 & 0 & 0\\0 & 0 & 1 & 0\\0 & 0 & 0 & 1\end{pmatrix},
$$
The subgroup of $\GL_4(2)$ generated by these matrices, say $L$, is isomorphic to $D_{8}$, so the action of $K$ on $W$
is faithful. It follows from Lemma \ref{matriz} that $\Hol(V,L)\cong\Hol(V,H)$ even though $L$ is not isomorphic to $H$.

The general case when $\{v_1,\dots,v_n\}$ is the canonical basis of $V$ follows by extending $X,Y,Z$ so that they fix $v_5,\dots,v_n$.
\end{exa}

\begin{exa}\label{e1} Conditions (C1)-(C3) below ensure that a group $G$ admits automorphisms $\alpha$ and~$\beta$ such 
that $\Hol(G,\alpha)\cong \Hol(G,\beta)$ but $\langle \alpha\rangle\not\sim\langle \beta\rangle$.

Let $G$ be a group having elements $x$ and $y$ such that: (C1) $o(x)=o(y)$;
(C2) $o(x)=o(x Z(G))$ and $o(y)=o(y Z(G))$ (this means that $\langle x\rangle\cap Z(G)=1=\langle y\rangle\cap Z(G)$);
(C3) there is no automorphism of $G$ that sends $x^i$ to $yz$ for any $i$ relatively prime to the order of $x$ and any $z\in Z(G)$
(this means that when $\Aut(G)$ acts on $\Inn(G)$ by conjugation, the subgroups generated by $i(x)$ and $i(y)$
are in different orbits, where $i(g)\in \Inn(G)$ is the inner automorphism associated to $g\in G$).

Let $\alpha=i(x)$ and $\beta=i(y)$. By (C2), $\Hol(G,\alpha)=G\times\langle u\rangle$, where $u=x^{-1}\alpha$ has the same
order as $x$, and $\Hol(G,\beta)=G\times\langle v\rangle$, where $v=y^{-1}\beta$ has the same
order as $y$. Thus $\Hol(G,\alpha)\cong\Hol(G,\beta)$ by (C1). Also,
$\langle \alpha\rangle$ and $\langle \beta \rangle$ are not conjugate subgroups of $\mathrm{Aut}(G)$ by~(C3).

Many groups satisfy (C1)-(C3). A centerless group $G$ having cyclic subgroups of the same order that
are not in the same $\Aut(G)$-orbit meets (C1)-(C3). For instance: $S_n$, $n\geq 4$, taking $x=(1,2)$ and $y=(1,2)(3,4)$,
and looking at the size of their centralizers; a free group on $n\geq 2$ generators $x_1,\dots,x_n$,
taking $x=x_1$ and $y=[x_1,x_2]$. The general linear group $\GL_n(p)$, with $n\geq 4$ and $p$ odd, is not centerless and
satisfies (C1)-(C3), taking $x=\mathrm{diag}(-1,1,\dots,1)$ and $y=\mathrm{diag}(-1,-1,1,\dots,1)$,
and looking at the size of their centralizers.

As another example, assume $m\geq 2$ and let $G=\Hol(V,\alpha)$, where $\alpha$ acts on $V$, with respect to some basis,
via the direct sum of $m$ copies of the matrix
$$
\begin{pmatrix} 1 & 1\\ 0 & 1\end{pmatrix},
$$
where $n=2m$. Thus the order of $G$ is $p^{2m+1}$. As an abstract group,
\begin{equation}
\label{pres}
G=\langle x_1,x_2,\dots,x_{2m-1},x_{2m},y\,|\, [x_i,x_j]=1, x_i^p=1=y^p, {}^y x_{2i-1}=x_{2i-1}, {}^y x_{2i}=x_{2i} x_{2i-1}\rangle.
\end{equation}
The simplest example occurs when $m=2$ and $p=2$, in which case $|G|=32$. In this case, $G$ can also be described as
a Sylow 2-subgroup $P$ of $\GL_2(\Z/4\Z)$. Indeed, let $N$ be the kernel of the canonical map $\GL_2(\Z/4\Z)\to \GL_2(\Z/2\Z)$,
which consists of all 16 matrices of the form $1+X$, where $X\in M_2(2\Z/4\Z)$. Then $N\cong C_2^4$, and $N$ is  generated by
$$
B=\begin{pmatrix} 1 & 2\\ 0 & 1\end{pmatrix}, C=\begin{pmatrix} 1 & 2\\ 2 & 1\end{pmatrix},
D=\begin{pmatrix} -1 & 0\\ 0 & 1\end{pmatrix}, E=\begin{pmatrix} -1 & 0\\ 0 & -1\end{pmatrix}.
$$
Defining $A\in P$ by
$$
A=\begin{pmatrix} 0 & 1\\ 1 & 0\end{pmatrix},
$$
we see that
$$
{}^A B=BC, {}^A C =C, {}^A D=DE, {}^A E =E.
$$

Using the notation from (\ref{pres}), 
the center of $G$ is generated by all $x_{2i-1}$. Take $x=x_2$ and $y$ as given. Then $x$ and $y$ have order $p$,
which remains the same modulo $Z(G)$. The centralizer of $x^i$, when $p\nmid i$, is the subgroup~$T$ generated by all $x_j$.
The centralizer of $yz$, when $z\in Z(G)$, is equal to $Z(G)\langle y\rangle$. Here $|T|=p^{2m}$ and $|Z(G)\langle y\rangle|=p^{m+1}$.
Since $m\geq 2$, it follows that $T$ cannot be mapped into $Z(G)\langle y\rangle$
by any automorphism of $G$, so $x$ and $y$ meet the required conditions. Note that if $m=1$, then
$G=\mathrm{Heis}(p)$, in which case $G$ does not satisfy (C1)-(C3). Indeed, if $p$ is odd, then any two non-central
elements of $G$ produce subgroups of inner automorphisms of order $p$ that are in the same $\mathrm{Aut}(G)$-orbit; if $p=2$
then $G=D_8$, and $D_8$ is non-admitting (see
Example \ref{e6} below). 

\end{exa}

Our last examples discusses instances of non-admitting finite groups.

\begin{exa}\label{e6}  (a) Suppose that $n=2$ and $p=2$. Then $V$ is non-admitting. 

Indeed, every proper subgroup of $\GL(V)\cong S_3$ is cyclic, and all cyclic subgroups of $S_3$ of the same order are conjugate in $S_3$.

(b) Suppose that $n=3$ and $p=2$. Then $V$ is non-admitting.

Indeed, set $V=F^3$ and let $\{v_1,v_2,v_3\}$ be the canonical basis of $V$.
It is known that the only cases when $\GL_3(2)$ has subgroups of the same order that are not conjugate occur
for orders 4, 12, and 24, and that there are 3 conjugacy classes of groups of order 4 and 2 conjugacy classes of groups of orders 12 and 24. In order 4,
let $H$ and $K$ respectively consist of all matrices of the form
$$
\begin{pmatrix} 1 & 0 & *\\ 0 & 1 & *\\ 0 & 0 & 1\end{pmatrix},
\begin{pmatrix} 1 & * & *\\ 0 & 1 & 0\\ 0 & 0 & 1\end{pmatrix}.
$$
Note that $G_1=\Hol(V,H)$ is not isomorphic to $G_2=\Hol(V,K)$, since $[G_1,G_1]=\langle v_1,v_2\rangle$
and $[G_2,G_2]=\langle v_1\rangle$ (this gives a quick way to verify that $H\not\sim K$). Next let $J$ be the subgroup
generated by the upper triangular Jordan block with eigenvalue 1. Thus $J$ is cyclic of order 4. Set  $G_3=\Hol(V,J)$.
Then $[G_3,G_3]=\langle v_1,v_2\rangle$, but $G_3$ is nilpotent of class 3 and $G_1$ is nilpotent of class 2.
Thus $G_3$ is not isomorphic to $G_1$ or to $G_2$.

In order 24, let $H$ and $K$ respectively consist of all matrices of the form
$$
\begin{pmatrix} A & u\\ 0 & 1\end{pmatrix},
\begin{pmatrix} 1 & w\\ 0 & A\end{pmatrix},
$$
where $A\in\GL_2(2)$, $u$ is a column vector of length 2, and $w$ is a row vector of length 2. 
Note that $G_1=\Hol(V,H)$ is not isomorphic to $G_2=\Hol(V,K)$, since $[G_1,G_1]=\langle v_1,v_2\rangle\rtimes A_4$
and $[G_2,G_2]=V\rtimes A_4$ (this gives a quick way to verify that $H\not\sim K$).

In order 12 the situation is as above, but with $A\in\langle C\rangle$, where $C$ is the companion matrix of the polynomial 
$t^2+t+1\in F[t]$.
The outcome is the same.

(c) In addition to the group $C_2^3$ discussed above, every other group of order 8 is non-admitting.
These cases are easily verified and we omit the details.

(d) In addition to the group $C_8$ mentioned above, every cyclic group $C_{p^n}$ is non-admitting.
The case when $p$ is odd is trivial, and the case when $p=2$ requires routine calculations that we omit.
\end{exa}

\medskip

\noindent{\bf Acknowledgment.} We thank Eamonn O'Brien for Magma calculations, and Allen Herman for 
useful discussions.



\begin{thebibliography}{RBMW}

\bibitem[Be]{Be} I. H. Bekker, 
{\em The holomorphs of torsion-free abelian groups},
Izv. Vys\v{s}. U\v{c}ebn. Zaved. Matematika, 3 (1974) 3--11.


\bibitem[B]{B} W. Burnside, \emph{Theory of groups of finite order}, Cambridge, 1897.

\bibitem[C]{C} J. R. Clay, \emph{Completeness of relative holomorphs of abelian groups},
Rocky Mountain J. Math., 10 (1980) 731--741.


\bibitem[CD]{CD} A. Caranti and F. Dalla Volta, \emph{Groups that have the same holomorph as a finite perfect group},
J. Algebra, 507 (2018) 81--102.


\bibitem[D]{D} R. S. Dark, \emph{A complete group of odd order}, 
Math. Proc. Camb. Philos. Soc., 77 (1975) 21--28.

\bibitem[H]{H} N. C. Hsu, \emph{The holomorphs of free abelian groups of finite rank},
Amer. Math. Monthly, 72 (1965) 754--756.

\bibitem[K]{K} N.F. Kuzennyi, \emph{Isomorphism of semidirect products},
Ukr. Math. J., 26 (1974) 543--547.

\bibitem[Ko]{Ko} T. Kohl, \emph{Groups of order $4p$, twisted wreath products and Hopf-Galois
theory}, J. Algebra, 314 (2007) 72–-74.

\bibitem[Ko2]{Ko2} T. Kohl, \emph{Multiple holomorphs of dihedral and quaternionic groups}, 
Comm. Algebra, 43 (2015) 4290–-4304.


\bibitem[M]{M} G. A. Miller, \emph{On the multiple holomorphs of a group}, Math. Ann., 66 (1908) 133--142.


\bibitem[Mi]{Mi} W. H. Mills, \emph{Multiple holomorphs of finitely generated abelian groups}, Trans. Amer.
Math. Soc., 71 (1951) 379--392.

\bibitem[Mi2]{Mi2} W. H. Mills, \emph{On the non-isomorphism of certain holomorphs}, 
Trans. Am. Math. Soc., 74 (1953) 428--443.


\bibitem[P]{P} W. Peremans, \emph{Completeness of holomorphs}, Nederl. Akad. Wet., Proc., Ser. A, 60 (1957) 608--619.













\bibitem[T]{T} D.R. Taunt, \emph{Remarks on the isomorphism problem in theories of construction of finite groups},
Proc. Cambridge Phil. Soc., 51 (1955) 16--24.


\bibitem[VO]{VO} A. Vera L\'{o}pez and L. Ortiz de Elguea, 
\emph{The conjugacy-vectors of all relative holomorphs of an elementary Abelian group of order 16},
Port. Math.,  47 (1990) 243--257.








\end{thebibliography}
\end{document}